\numberwithin{equation}{section}
\title{Robust Burg Estimation of Radar Scatter Matrix for Autoregressive structured SIRV based on Fr\'echet medians}
\def\input@path{{images/}{./}}
\def\eqlaw{\buildrel d \over =}
\def \bSigma {\mathbf{\Sigma}}
\def \as {\overset{a.s.}{\rightarrow}}
\def \p {\mathbf{p}}
\def \w {\mathbf{w}}
\def \x {\mathbf{x}}
\def \y {\mathbf{y}}
\def \z {\mathbf{z}}
\newtheorem{proposition}{Proposition}
\newtheorem{remark}{Remark}
\author[1,*]{Alexis~Decurninge}
\affil{Thales Air Systems, Voie Pierre Gilles de Gennes 91470 Limours, France}
\author[1]{Fr\'{e}d\'{e}ric~Barbaresco}
\abstract{We  address the estimation of the scatter matrix of a scale mixture of Gaussian stationary autoregressive vectors. This is equivalent to consider the estimation of a structured scatter matrix of a Spherically Invariant Random Vector (SIRV) whose structure comes from an autoregressive modelization. The Toeplitz structure representative of stationary models is a particular case for the class of structures we consider.\\
For Gaussian autoregressive processes, Burg method is often used in case of stationarity for its efficiency when few samples are available. Unfortunately, if we directly apply these methods to estimate the common scatter matrix of $N$ vectors coming from a non-Gaussian distribution, their efficiency will strongly decrease. We propose then to adapt these methods to scale mixtures of autoregressive vectors by changing the energy functional minimized in the Burg algorithm.\\
Moreover, we study several approaches of robust modification of the introduced Burg algorithms, based on Fr\'echet medians defined for the Euclidean or the Poincar\'e metric, in presence of outliers or contaminating distributions. The considered structured modelization is motivated by radar applications, the performances of our methods will then be compared to the very popular Fixed Point estimator and OS-CFAR detector through radar simulated scenarios.}
\begin{document}

\maketitle
%\vspace{-1cm}

%\centerline
%\vspace{1cm}

%\noindent\textbf{Abstract}:
%We  address the estimation of the scatter matrix of a scale mixture of Gaussian stationary autoregressive vectors. This is equivalent to consider the estimation of a structured scatter matrix of a Spherically Invariant Random Vector (SIRV) whose structure comes from an autoregressive modelization. The Toeplitz structure representative of stationary models is a particular case for the class of structures we consider.\\
%For Gaussian autoregressive processes, Burg method is often used in case of stationarity for their efficiency when few samples are available. Unfortunately, if we directly apply these methods to estimate the common scatter matrix of $N$ vectors coming from a non-Gaussian distribution, their efficiency will strongly decrease. We propose then to adapt these methods to scale mixtures of autoregressive vectors by changing the energy functional minimized in the Burg algorithm.\\
%Moreover, we study several approaches of robust modification of the introduced Burg algorithms in presence of outliers or contaminating distributions. The considered structured modelization is motivated by radar applications, the performances of our methods will then be compared to the very popular Fixed Point estimator and OS-CFAR detector through radar simulated scenarios.
%

\section{Motivations}

\subsection{Context}
Real radar measurements of strong low grazing angle clutters such as ground or sea clutters showed that these clutters should be described by non-Gaussian distributions, especially heavy-tailed \cite{trizna91,watts85,billingsley93,watts16,watts16b}. The family of complex spherically invariant random vectors (SIRV), a subfamily of the elliptically symmetric distributions \cite{ollila12} (which contains a lot of classical distributions such as multivariate Gaussian, multivariate Cauchy distributions and multivariate K-distributions) is a useful generalization of Gaussian random vectors, inheriting of similar shape and location parameters. This family has been often used to modelize such radar clutters (see e.g. \cite{conte04,farina87,gini97,ollila12}). In this paper, we propose estimators of the scatter matrix of a zero-mean SIRV with a particular structure coming from an autoregressive (AR) modelization of the correlation between coordinates of the vector. This structured model is natural to describe the temporal correlation between radar pulse responses when signal is stationary and was already considered for example in \cite{bouvier95,michels95,trench64}.\\
In the following, the conjugate transpose operator applied on vectors or matrices is denoted by $(.)^*$ while the conjugate operator applied on a scalar $z$ is denoted by $\overline{z}$, vectors and matrices are denoted by \textbf{bold} letters, scalars by non-bold letters and $\as$ denotes the almost sure convergence.\\

Denote $\x=(x_1,...,x_d)^T\in\mathbb{C}^d$ a zero-mean SIRV representing for example the response of $d$ pulses for a radar spatial cell. $\x$ is then characterized by the existence of a Gaussian vector $\y$ of covariance $\bSigma$ and a positive non-Gaussian amplitude $\tau$ such that $\x\eqlaw \tau \y$; $\bSigma$ is called the scatter matrix \cite{yao73}. Moreover, we suppose that the scatter matrix has the same structure as the covariance matrix of a stationary AR vector of order $M$ (i.e. $\y$ is the trace of a Gaussian AR process of order $M$; see Section \ref{section_mixtures}).\\
Assuming that $\x_1,..,\x_N$ is an independently and identically distributed (iid) sample with a SIRV distribution on $\mathbb{C}^d$, the main focus of our study lies in the estimation of the constrained scatter matrix $\bSigma$ of the underlying distribution. Within this framework, we consider two kinds of robustness for the estimation of the scatter matrix :
\begin{itemize}
\item (R1)\ a robustness with respect to the distribution of the amplitude $\tau$ which is often heavy-tailed and will be considered as unknown.
\item (R2)\ a robustness with respect to a contamination in the observed sample: a fraction of the samples are outliers or come from a different distribution (see Section \ref{section_simu} for radar use cases of contamination).
\end{itemize}

\subsection{Prior art}
The estimation of covariance matrix of SIRV with or without structure has been the motivation of many works in the past, especially for radar applications. In the Gaussian framework, taking into account the structure of a covariance matrix has been shown to improve performance of both estimation \cite{stoica99} and target detection \cite{bose95}.\\
Non-Gaussian models of low grazing angle clutters involving SIRV has been early proposed in order to improve the estimation of the scatter matrix for example through non-linear transformations or cumulants as well as the performance of target detection in this framework \cite{farina87,gini97b,michels95}. These approaches however often consider that the law of the non-Gaussian amplitude $\tau$ is known.\\
Maronna proposed a class of Huber-type M-estimators of the scatter matrix $\bSigma$ that do not assume this knowledge (therefore robust in the sense of (R1)). They are defined as solution of the equation \cite{maronna76} :
\begin{equation}
\hat\bSigma = \frac{1}{N} \sum_{i=1}^N u(\x_i^*\hat\bSigma^{-1}\x_i)\x_i\x_i^*.
\end{equation}
The function $u$ has to satisfy some conditions for the estimator to be defined and consistent. A major drawback of these estimators is their non-invariance with respect to the distribution of the amplitude. For this sake, Tyler \cite{tyler87} proposed the estimator satisfying 
\begin{equation}
\hat\bSigma = \frac{d}{N} \sum_{i=1}^N \frac{\x_i\x_i^*}{\x_i^*\hat\bSigma^{-1}\x_i}.
\label{tyler}
\end{equation}
The function $u(x)=\frac{1}{x}$ does not satisfy the conditions of Maronna but Tyler has shown that the estimator solution of Eq. (\ref{tyler}) is well defined and consistent. It was furthermore shown to be a maximum likelihood estimator for normalized samples $\frac{\x_1}{\|\x_1\|},..,\frac{\x_N}{\|\x_N\|}$ (often called multivariate signs). Some authors rediscovered and studied this estimator in its complex version in the radar context \cite{conte02,gini02,pascal08}. These estimators however do not consider any structure on the matrix $\Sigma$.\\

In case of stationary signals, we have to take into account a Toeplitz intrinsic structure for the scatter matrix in the SIRV framework. It can be performed by computing the constrained maximum likelihood of normalized samples (or other statistical criterion) in the space of positive definite matrices with Toeplitz constraints or more specific structures (see \cite{bini14,soloveychik14,sun15,pailloux10,zhang13,wiesel15}). Our approach is slightly different in the sense that we propose a reparametrization of the scatter matrix and minimize a criterion based on the induced parameters.\\
Indeed, the autoregressive structure allows us to split the estimation of the matrix $\bSigma$ of size $d\times d$ into $d$ estimations of Toeplitz matrices of size $2\times 2$. This splitting corresponds to the so-called ``Burg technique" \cite{burg78}. Instead of estimating the covariance of the raw sample $\x_1,...,\x_N\in\mathbb{C}^d$, we iteratively define second-order samples in $\mathbb{C}^2$ whose theoretical covariance can be expressed in function of $\bSigma$ (see Section \ref{section_burg}).\\
This technique was originally proposed in the context of stationary Gaussian AR time series. Note that if we consider $\x$ as the trace of an AR process of order $M<d-1$, we add more structure on the matrix $\bSigma$ than the Toeplitz one. Actually, given the autocovariance taps $\mathbb{E}[x_1\overline{x_k}]$ for $k= 1,...,M$ with $M\leq d-1$, it is well known that the maximum entropy model pertaining to the vector $\x=(x_1,...,x_d)^T$ in $\mathbb{C}^d$ results as the complex Gaussian distribution in $\mathbb{C}^d$, whose covariance coincides with the autocovariance of size $d\times d$ of the trace of a Gaussian AR process of order $M$ (see \cite{barbaresco14,burg78,pavon13}).\\
The reparametrization of any Toeplitz covariance matrix by one real positive power parameter and $d-1$ complex coefficients (called reflection parameters; see Section \ref{section_burg}) underlying the Burg technique was also denoted by Trench (see \cite{trench64}). The Toeplitz structure is then a particular case for the class of autoregressive structures when $M=d-1$.\\

\subsection{Outline of the paper}

The Burg technique was initially expressed for the estimation of the autoregressive model of one process (hence one range case). However, it is possible to modify Burg estimates in order to combine multiple samples coming from different range cases (``segments'')  in the Gaussian context \cite{haykin82}. The first contribution of the following paper is the adaptation of this Multisegment Gaussian Burg estimation, that we will call Normalized Burg, to the SIRV context.\\

%The first contribution of the following paper is the adaptation of Burg techniques for non-Gaussian scale mixtures of AR vectors, a big subfamily of the class of SIRV with the autoregressive intrinsic structure.\\

An alternative to Multisegment estimations is to take the mean of models coming from each estimated segment \cite{beex86}. Since reflection parameters do not depend on the amplitude realization (Burg technique separates the power parameter from reflection parameters), the robustness with respect to (R1) is ensured. However such estimators are not robust with respect to (R2) like Multisegment (Gaussian and Normalized) estimates. We propose then to study a geometrical method consisting in computing the median (instead of the mean) of AR parameters estimated from $\x_1,...,\x_N$ in both Euclidean and Riemannian context (see \cite{arnaudon13,barbaresco12,yang12}) and a refinement consisting in a 2-step estimation through a selection of the ``better" samples presented in \cite{aubry12,aubry14}. This will allow us to cumulate robustness with respect to heavy-tailed amplitudes (R1) and contamination (R2).\\

The paper is organized as follows. Section \ref{section_mixtures} presents the mixture of AR vector model. We introduce the Normalized Burg algorithm adapted to the case of SIRV models in Section \ref{section_burg}. We also present the average Burg estimators as well as its robust (with respect to (R2)) modifications in this Section. Finally, we illustrate the performances of the introduced estimators through some simulations of radar scenarios in Section \ref{section_simu}.

\section{Mixtures of autoregressive processes}
\label{section_mixtures}
Let $\x\in\mathbb{C}^d$ be the random vector coming from a mixture of stationary Gaussian AR random vectors. Then, $\x$  is characterized by the existence of a scalar random variable $\tau>0$ and a scatter matrix $\bSigma$ such that : 
\begin{equation}
\x = \tau \y
\end{equation}
where $\y\sim\mathcal{N}_d(0,\bSigma)$ is a complex Gaussian vector (called speckle) of covariance matrix $\bSigma$ independent of $\tau$ (called texture). $\bSigma$ is then defined up to a multiplicative constant due to the presence of $\tau$ (we can multiply $\bSigma$ and divide $\tau$ by the same positive constant without changing the vector $\x$). We will then consider in the following the constraint $\text{Tr}(\bSigma) = d$ (see \cite{gini02}).\\
As $\y$ comes from a stationary Gaussian AR process of order $M\leq d-1$, if we note $\y=(y_1, \dots, y_d)^T$, there exist $a_1^{(M)},...,a_M^{(M)}\in\mathbb{C}$ such that for $1\leq n \leq d$ :
\begin{equation}
y_n + \sum_{i=1}^M a^{(M)}_i y_{n-i} = b_n
\end{equation}
where $b_n$ is a complex standard Gaussian variable independent of $y_{n-1},..., y_{n-M}$ and with the convention $y_{-i}=0$ for all $i\geq 0$.\\
We can remark that $\x$ is also an AR process with non-Gaussian innovations : 
\begin{equation}
x_n + \sum_{i=1}^M a^{(M)}_i x_{n-i} = \tau b_n.
\end{equation}

\section{Burg algorithms}
\label{section_burg}
\subsection{Multisegment Gaussian Burg method applied to Gaussian process}
We first present the well-known Burg method for Gaussian AR vectors. All the definitions we introduce for the process $\y$ are still valid for $\x$.\\
Let define the autocorrelation function for $t\geq 0$ : $\gamma(t) = \mathbb{E}[y_{n+t}\overline{y_n}]$ for any $n$ such that the expectation has a sense. $\gamma$ is independent of $n$ because of the stationarity of $\y$. Moreover, the stationarity condition can be summed up by Yule-Walker equation :
\begin{equation}
\left(\begin{array}{ccc}
\gamma(0)  & \hdots & \gamma(M-1)\\
\overline{\gamma(1)}  & \hdots & \gamma(M-2) \\
\vdots & \vdots &  \vdots \\
\overline{\gamma(M-1)}  & \hdots & \gamma(0)
\end{array}\right)
\left(\begin{array}{cc}
a_M^{(M)}\\ \vdots \\ a_1^{(M)}
\end{array}\right)
= -
\left(\begin{array}{cc}
\gamma(M)\\ \vdots \\ \gamma(1)
\end{array}\right).
\end{equation}
Levinson algorithm inverts this equation by introducing the successive AR parameters $(a_k^{(m)})_{1\leq k\leq m}$ of order $1\leq m\leq M$  :
\begin{itemize}
\item Initialization : define $P_0 = \gamma(0)$ and
\begin{equation}
\left\{\begin{array}{l}
\mu_1 = a_1^{(1)} = - \frac{\gamma(1)}{P_0}\\
P_1 = P_0(1-|\mu_1|^2)
\end{array}\right..
\label{levinson0}
\end{equation}
\item For $1\leq m\leq M-1$
\begin{equation}
\left\{\begin{array}{l}
\mu_{m+1} = a_{m+1}^{(m+1)} = - \frac{\gamma(m+1)+\sum_{k=1}^m a_k{(m)}\gamma(m+1-k)}{P_m}\\
P_{m+1} = P_m(1-|\mu_m|^2)\\
\left(\begin{array}{c} a_1^{(m+1)}\\ \vdots\\ a_m^{(m+1)}\end{array}\right)
=\left(\begin{array}{c} a_1^{(m)}\\ \vdots\\ a_m^{(m)}\end{array}\right)
+\mu_{m+1}\left(\begin{array}{c} \overline{a}_m^{(m)}\\ \vdots\\ \overline{a}_1^{(m)}\end{array}\right)
\end{array}\right..
\label{levinson}
\end{equation}
\end{itemize}
This algorithm enhances the role of the parameters $(\mu_m)_{1\leq m\leq M}$, called reflection parameters, that are sufficient with $P_0$ to describe the AR vector $\y$. Note furthermore that the condition $P_0=1$ is equivalent to the aforementioned condition $\text{Tr}(\bSigma)=d$.\\
Instead of estimating the covariance matrix directly from the samples which does not guarantee the Toeplitz constraint, we estimate these reflection parameters adapted for AR random vectors (we will then use the bijection given by equations (\ref{levinson0}) and (\ref{levinson}) to recover an estimated covariance).\\
For this purpose, Burg proposed in the Gaussian framework to minimize an energy at each step $1\leq m\leq M$ :
\begin{equation}
U^{(m)} = \sum_{n=m+1}^d |f_m(n)|^2 + |b_m(n)|^2
\end{equation}
with $f_m$ and $b_m$ respectively the ``forward" and ``backward" errors defined for $m+1\leq n \leq d$ : 
\begin{equation}
\left\{\begin{array}{l}
f_m(n) = y_n + \sum_{k=1}^m a_k^{(m)}y_{n-k}\\
b_m(n) = y_{n-m} + \sum_{k=1}^m \overline{a}_k^{(m)}y_{n-m+k}
\end{array}\right..
\label{deferrors}
\end{equation}
Note that the definition of the errors is still valid for $m=0$. Thanks to Equation (\ref{levinson}), we can state for $m+2 \leq n \leq d$ :
\begin{equation*}
\left\{\begin{array}{l}
f_{m+1}(n) =f_{m}(n) +\mu_{m+1}b_m(n-1)\\
b_{m+1}(n) =b_{m}(n-1) +\overline{\mu_{m+1}}f_m(n)
\end{array}\right..
\end{equation*}

\begin{remark}
\label{remark_reg}
When there is no prior information on the model order, namely $M$, we should take it as high as possible, i.e. $M=d-1$. However, when $N$ is small, this choice could lead to a poor estimation of the reflection parameters even if the ``true'' model order is low. A classical way to solve this problem is to minimize the energy $U^{(m)}+\gamma C^{(m)}$ where $C^{(m)}$ corresponds to a spectral smoothness of the AR process and $\gamma$ tuned the compromise between regularization and estimation; see \cite{barbaresco96} for details on the regularization of Gaussian Burg estimators and \cite{decurninge14} for the regularized version of Normalized Burg defined hereafter.
\end{remark}
\vspace{0.5cm}
The estimation of the reflection parameters consists then in the solution of the minimization of the empirical energy for a sample $\x_1,..,\x_N$ : 
\begin{equation*}
\hat{U}^{(m)} = \sum_{i=1}^N \sum_{n=m+1}^d |f_{i,m}(n)|^2 + |b_{i,m}(n)|^2
\end{equation*}
where, for each $i$, $f_{i,m}$ and $b_{i,m}$ are the forward and backward errors for the sample $\y_i$. Knowing $\mu_1,...,\mu_m$, we have a closed-form expression for the estimate of $\mu_{m+1}$  (this estimator is called Multisegment Gaussian Burg see \cite{haykin82}) :
\begin{equation}
\hat\mu_{m+1}^{(gauss)} = \arg\min_{\mu_{m+1}} \hat{U}^{(m+1)} = -2 \frac{ \sum_{i=1}^N \sum_{n=m+2}^d f_{im}(n)\overline{b_{im}(n-1)}}{ \sum_{i=1}^N \sum_{n=m+2}^d |f_{im}(n)|^2 + |b_{im}(n-1)|^2}.
\label{mugauss}
\end{equation}

\subsection{Multisegment Normalized Burg method for non-Gaussian vectors}

We now consider the AR vector $\x$. The forward and backward errors are still defined by Equation (\ref{deferrors}). The estimator defined by Equation (\ref{mugauss}) applied for $\x$ will suffer from the disparity of the realizations of the scalar part $\tau$ which leads us to adapt the method by considering a different energy independent of the realizations of the texture $\tau$:
\begin{equation}
U^{(m+1)} = \sum_{n=m+2}^d \frac{|f_{m+1}(n)|^2 + |b_{m+1}(n)|^2}{|f_{m}(n)|^2 + |b_{m}(n-1)|^2}.
\end{equation}
The minimum of the empirical version of the previous energy is then : 
\begin{equation}
\hat\mu_{m+1} = -\frac{2}{N(d-m-1)}\sum_{i=1}^N \sum_{n=m+2}^d  \frac{\overline{b_{i,m}(n-1)} f_{i,m}(n)}{|f_{i,m}(n)|^2 + |b_{i,m}(n-1)|^2}.
\label{estimator1}
\end{equation}
The drawback is that $\hat\mu_{m+1}$ is not consistent. We can however correct the asymptotic bias:
\begin{proposition}
For $1 \leq m \leq M$ and $\hat{\mu}_m$ defined by Eq. (\ref{estimator1})
\begin{equation}
\hat\mu_{m} \as B_1(|\mu_m|) \frac{\mu_m}{|\mu_m|}
\end{equation}
with $B_1$ defined for $x>0$ by :
\begin{equation}
B_1(x) = \frac{1-x^2}{x}\left(\frac{\log(1-x)-\log(1+x)}{2x} + \frac{1}{1-x^2}\right).
\end{equation}
\end{proposition}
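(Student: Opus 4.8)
The plan is to let $N\to\infty$ via the strong law of large numbers, reduce the limiting expectation to a purely Gaussian bivariate computation, and evaluate that integral in closed form.

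After the index shift $m+1\mapsto m$, Eq.~(\ref{estimator1}) reads
\begin{equation*}
\hat\mu_m = -\frac{2}{N(d-m)}\sum_{i=1}^N\sum_{n=m+1}^d \frac{\overline{b_{i,m-1}(n-1)}\,f_{i,m-1}(n)}{|f_{i,m-1}(n)|^2+|b_{i,m-1}(n-1)|^2}.
\end{equation*}
Each $i$-summand is an i.i.d. copy of a fixed random variable bounded by $\tfrac12$ (the arithmetic--geometric inequality gives $|\overline b f|\le\tfrac12(|f|^2+|b|^2)$), hence integrable, so the strong law applies termwise. Because $\x_i=\tau_i\y_i$ with $\tau_i>0$ a common real scalar, both $f_{i,m-1}$ and $b_{i,m-1}$ carry the factor $\tau_i$, which cancels in the ratio; the expectation may therefore be computed as if the samples were the Gaussian speckle $\y_i$. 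Finally, for $n\ge m+1$ every index entering $f_{m-1}(n)$ and $b_{m-1}(n-1)$ is $\ge1$, so by stationarity of $\y$ their joint law does not depend on $n$ and the inner average over the $d-m$ indices collapses, leaving
\begin{equation*}
\hat\mu_m \as -2\,\mathbb{E}\!\left[\frac{\overline{b_{m-1}(n-1)}\,f_{m-1}(n)}{|f_{m-1}(n)|^2+|b_{m-1}(n-1)|^2}\right].
\end{equation*}

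I would next identify the relevant Gaussian law. The errors $f_{m-1}(n)$ and $b_{m-1}(n-1)$ are circularly symmetric complex Gaussian with equal powers $\mathbb{E}|f_{m-1}(n)|^2=\mathbb{E}|b_{m-1}(n-1)|^2=P_{m-1}$, and the reflection coefficient is their partial correlation, $\mathbb{E}[f_{m-1}(n)\overline{b_{m-1}(n-1)}]=-\mu_m P_{m-1}$ (the population version of Eq.~(\ref{mugauss}) together with the Levinson recursion (\ref{levinson})). Normalising $F=f_{m-1}(n)/\sqrt{P_{m-1}}$ and $B=b_{m-1}(n-1)/\sqrt{P_{m-1}}$ gives unit-variance complex Gaussians with $\mathbb{E}[F\overline B]=\rho:=-\mu_m$, and $P_{m-1}$ cancels in the ratio. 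Writing $\rho=|\rho|e^{i\psi}$ and replacing $B$ by $e^{i\psi}B$ rotates the correlation to the real value $r:=|\rho|=|\mu_m|$ while multiplying the ratio by $e^{i\psi}=\rho/|\rho|=-\mu_m/|\mu_m|$. It thus remains to evaluate the real quantity $\tilde E=\mathbb{E}\!\left[\overline BF/(|F|^2+|B|^2)\right]$ for real correlation $r\in(0,1)$, after which $\hat\mu_m\as -2(\rho/|\rho|)\tilde E=2(\mu_m/|\mu_m|)\tilde E$.

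For $\tilde E$ I would use the Laplace representation $\frac{1}{|F|^2+|B|^2}=\int_0^\infty e^{-t(|F|^2+|B|^2)}\,dt$ and interchange it with the expectation. For fixed $t$ the weight $e^{-t(|F|^2+|B|^2)}$ turns the density into another centred circularly symmetric complex Gaussian whose inverse covariance has diagonal entries $\alpha=\frac{1}{1-r^2}+t$ and off-diagonal entries $-\beta$ with $\beta=\frac{r}{1-r^2}$, so the weighted moment is the elementary quantity $\mathbb{E}[\overline BF\,e^{-t(|F|^2+|B|^2)}]=\beta/[(1-r^2)(\alpha^2-\beta^2)^2]$. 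Factoring $\alpha^2-\beta^2=(t+\tfrac{1}{1+r})(t+\tfrac{1}{1-r})$ and integrating over $t$ by partial fractions, using $\int_0^\infty(t+a)^{-2}dt=1/a$ and $\int_0^\infty[(t+a)(t+c)]^{-1}dt=\frac{\ln(c/a)}{c-a}$ with $a=\frac{1}{1+r}$, $c=\frac{1}{1-r}$, yields after simplification
\begin{equation*}
\tilde E=\frac{1}{2r}-\frac{(1-r^2)\big(\ln(1+r)-\ln(1-r)\big)}{4r^2}=\tfrac12\,B_1(r).
\end{equation*}
Substituting back $r=|\mu_m|$ and the phase factor gives $\hat\mu_m\as (\mu_m/|\mu_m|)\,B_1(|\mu_m|)$.

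The only genuinely delicate point is this closed-form evaluation of the bivariate Gaussian expectation; the preceding reductions are routine once the boundedness bound, the cancellation of $\tau_i$, the stationarity of the errors for $n\ge m+1$, and the partial-correlation identity $\mathbb{E}[f_{m-1}(n)\overline{b_{m-1}(n-1)}]=-\mu_m P_{m-1}$ are in place. Within the evaluation, the steps most prone to error are justifying the Fubini interchange of the $t$-integral with the expectation and tracking the partial-fraction constants, so I would reserve most care for matching the result to the stated normalisation of $B_1$.
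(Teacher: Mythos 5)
Your proof is correct; I checked the key evaluation: with $a=\frac{1}{1+r}$, $c=\frac{1}{1-r}$ one gets $\int_0^\infty\bigl[(t+a)(t+c)\bigr]^{-2}dt=\frac{(1-r^2)^2}{4r^2}\bigl(2-\frac{1-r^2}{r}\log\frac{1+r}{1-r}\bigr)$, hence $\tilde E=\frac{1}{2}B_1(r)$, and the phase bookkeeping $e^{i\psi}=-\mu_m/|\mu_m|$ combined with the leading factor $-2$ gives exactly the stated limit. The skeleton of your reduction — strong law over the $N$ i.i.d.\ range cases (with the boundedness remark securing integrability), cancellation of the texture $\tau_i$ in the normalized ratio, stationarity collapsing the inner sum over $n$, and the moment identities $\mathbb{E}[|f|^2]=\mathbb{E}[|b|^2]=P_{m-1}$, $\mathbb{E}[f\overline{b}]=-\mu_m P_{m-1}$ — coincides with the paper's, which quotes those identities from Brockwell--Dalhaus. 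Where you genuinely diverge is the central computation: the paper does not compute anything, it invokes Theorem 1 of Bausson et al.\ (2007) on the first-order moments of the normalized sample covariance matrix of a SIRV, applied to the bivariate vector $(f_m(n),b_m(n-1))^T$, and that cited theorem is the sole source of the function $B_1$; you instead re-derive the bivariate circular-Gaussian expectation from scratch via the Laplace representation of $(|F|^2+|B|^2)^{-1}$, the Gaussian reweighting $C^{-1}\mapsto C^{-1}+tI$, and partial fractions. Your route makes the proposition self-contained, and it makes transparent both the independence from the texture and why only $|\mu_m|$ enters; the paper's route is shorter but opaque without the reference. One caveat you share with the paper: both arguments treat $f_{i,m-1}$ and $b_{i,m-1}$ as the population errors built from the true lower-order coefficients, whereas the algorithm builds them from the previously estimated (and, before correction, biased) ones; neither proof addresses this, so it is not a gap relative to the paper's own standard.
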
 
\begin{proof}
This is an application of Theorem 1 of \cite{bausson07} applied for the vector $\left(\begin{array}{c}f_m(n)\\b_m(n-1)\end{array}\right)$. We apply the law of large numbers for the empirical sum $\hat\mu_m$ by noting that for $m\geq 0$ and $m+2 \leq n \leq d$ (Prop. 1 of \cite{dalhaus03} applied for $\{k_1,..,k_m\} = \{1,...,m\}$)
\begin{equation*}
\left\{\begin{array}{l}
\mathbb{E}[|f_{m}(n)|^2] = \mathbb{E}[|b_{m}(n-1)|^2] = P_m\\
\mathbb{E}[f_{m}(n)\overline{b_{m}(n-1)}|] = -P_m\mu_{m+1}
\end{array}\right..
\end{equation*}
\end{proof}

\begin{figure}[!t]
\centering
\includegraphics[width=2.5in]{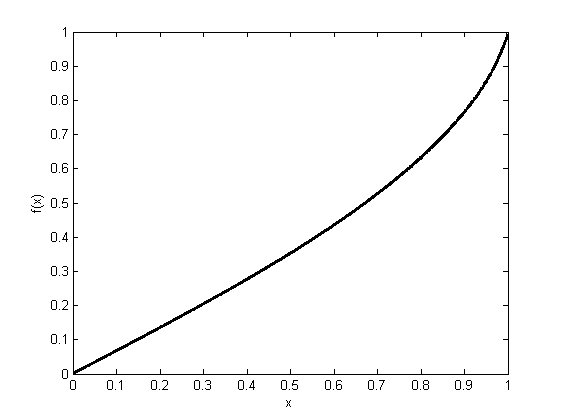}
\caption{Bias function $B_1$.}
\label{fig_bias}
\end{figure}

The consistent version of (\ref{estimator1}) is then : 
\begin{equation}
\hat\mu^{(u)}_{m} = B_1^{-1}(|\hat\mu_{m}|)\frac{\hat\mu_{m}}{|\hat\mu_{m}|}.
\label{normburg}
\end{equation}
$B_1^{-1}$ is not explicit but can be pre-computed on a grid for a gain of time (see Fig. \ref{fig_bias}).

\subsection{Average Burg estimators}
\subsubsection{Euclidean Mean Burg}
The Gaussian and Normalized Burg estimators presented above combine two processes:
\begin{itemize}
\item the iteration on the reflection parameters (with a propagation of errors from $\mu_1$ to $\mu_M$)
\item the averaging process with the $N$ spatial range cases
\end{itemize}
The Gaussian and Normalized Burg both perform the spatial averaging in the temporal iteration loop.\\
However, since spatial range cases may not be statistically homogeneous, it seems more robust to treat these two processes separately in order that the error due to the presence of outliers are not propagating in the iteration on the reflection parameters. For that purpose, since the realization of the amplitude parameter $\tau$ is shared amongst one range case, a Gaussian Burg estimation of the reflection parameters of each range case should be performed and a spatial ``average'' afterwards. Since the number of pulses $d$ may be small, the estimation of the reflection parameters for each range case $i$ (denoted hereafter by $\hat{\mu}_{m}^{(i)}$) may not be accurate, i.e. the variance of these local estimates will be high (it is comparable to the variance of $\x_i\x_i^*$ as an estimate of the covariance) but the spatial averaging counters this effect as detailed below.\\

%If we compare Eqs. (\ref{mugauss}) and (\ref{estimator1}), we view Normalized Burg as an iterative incoherent version of the Gaussian Burg estimator. An intermediate version is an average estimator (defined e.g. in \cite{beex86})
With previous notations, a simple average estimator (defined e.g. in \cite{beex86}) is then (the superscript $.^{(i)}$ will refer to the $i$-th range case)
\begin{equation}
\hat\mu_{m+1} = \frac{1}{N}\sum_{i=1}^N \frac{-\sum_{n=m+2}^d  \overline{b_{i,m}(n-1)} f_{i,m}(n)}{\sum_{n=m+2}^d  1/2[|f_{i,m}(n)|^2 + |b_{i,m}(n-1)|^2]} := \frac{1}{N}\sum_{i=1}^N{\hat\mu_{m+1}^{(i)}}
\label{estimator2}
\end{equation}
Note that the iterative errors $b_{i,m}$ and $f_{i,m}$ are estimated only with the temporal samples $\x_i$ of the range case $i$. Hence, this is not a multisegment estimator since each reflection parameter $\hat\mu_{m+1}^{(i)}$ is estimated independently from the other range cases.\\
The average process will not affect the bias but will divide the variance by a factor $N$ since	
\begin{eqnarray*}
\mathbb{E}\left[\frac{1}{N}\sum_{i=1}^N\hat\mu^{(i)} - \mu\right] &=& \mathbb{E}[\hat\mu^{(1)} - \mu]\\
\text{and \ \ \ \ \ \ \ }\mathbb{E}\left[\left|\frac{1}{N}\sum_{i=1}^N\hat\mu^{(i)} - \mu\right|^2\right] &=& \frac{1}{N}\mathbb{E}[|\hat\mu^{(1)} - \mu|^2].
\end{eqnarray*}

Remark that the regularization evoked in Remark \ref{remark_reg} also induces a decreasing variance but an increasing bias (since some a priori knowledge on the spectral smoothness is introduced). For $N$ large enough, it is therefore not necessary to introduce bias since the presence of multiple spatial samples already decreases the variance of the estimators.\\

%Note that a regularization with respect to a spectral smoothness criterion (proposed e.g. in \cite{barbaresco96}) can also be performed. Its effects are an increasing bias and a decreasing variance. For $N$ large enough, it is therefore not necessary to introduce bias since the presence of multiple spatial samples already decrease the variance of the estimators.\\
%The bias is known to be of order $1/d$ \cite{tjostheim83} while the bias of the Normalized Burg is of order $1/(Nd)$?\\

\subsubsection{Poincar\'e Mean Burg}
\label{section_poincaremean}
Eq. (\ref{estimator2}) corresponds to the Euclidean mean of Burg estimators of each (short) time series $\x_i$. In the space of positive definite matrices, Aubry et al \cite{aubry12} showed through simulations the superiority of non-Euclidean metric in terms of performance of the target detection. In the case of reflection parameters, a generalization to an arbitrary Riemannian geometry is also possible 
\begin{equation}
\hat\mu_{m+1} = \text{mean}\left( \frac{-\sum_{n=m+2}^d  \overline{b_{i,m}(n-1)} f_{i,m}(n)}{\sum_{n=m+2}^d  1/2[|f_{i,m}(n)|^2 + |b_{i,m}(n-1)|^2]}\right),
\label{estimator}
\end{equation}
where we classically call mean the so-called Fr\'echet mean defined as a minimizer for a certain distance $d(.,.)$
\begin{eqnarray*}
\text{mean}(\mu^{(1)},\mu^{(2)},...,\mu^{(N)}) &=& \arg\min_{|\mu|<1} \sum_{i=1}^N d(\mu,\mu^{(i)})^2.
\end{eqnarray*}
We will consider a Riemannian metric related to the statistical model parameterized by the reflection parameters. Indeed, a natural information geometry can be associated to any parametric model by defining a Riemannian metric through the Fisher information  matrix or its dual version (see \cite{arnaudon13,barbaresco12,barbaresco14,barbaresco15,yang12}), that we will consider here, defined by
\begin{equation*}
ds^2 = \sum_{i,j} \frac{\partial \phi}{\partial w_i \partial w_j}dw_idw_j
\end{equation*}
where $\w=(P_0,\mu_1,...,\mu_M)^T$ and $\phi$ denotes the entropy of the Gaussian AR vector $\y$
\begin{equation*}
\phi(P_0,\mu_1,...,\mu_M) = -\sum_{k=1}^{M} (M+1-k)\log(1-|\mu_k|^2)-(M+1)\log(\pi e. P_0)
\end{equation*}
which gives us:
\begin{equation*}
ds^2 = (M+1)\left(\frac{dP_0}{P_0}\right)^2+\sum_{k=1}^M (M+1-k)\frac{|d\mu_k|^2}{(1-|\mu_k|^2)^2}.
\end{equation*}
Fortunately, the geometry associated to the AR model reparametrized by reflection parameters is simple in the sense that the geodesics do not have cross products which justifies the separation of power and reflection parameters. For each of these latter, the natural metric is the Poincar\'e metric in the unit disc $D=\{z\in\mathbb{C}\text{  s.t.  }|z|<1 \}$ : 
\begin{equation}
ds^2 = \frac{|dz|^2}{(1-|z|^2)^2}.
\end{equation}
which leads to the following distance function²
\begin{equation*}
d_P(\mu^{(1)},\mu^{(2)}) = \frac{1}{2}\log\left(\frac{1+\delta}{1-\delta}\right) \text{\ \ \ with \ \ \ } \delta = \left|\frac{\mu^{(1)}-\mu^{(2)}}{1-\mu^{(1)}\overline{\mu^{(2)}}}\right|.
\end{equation*}
Note that we derived the Poincar\'e metric from the entropy of a Gaussian distribution. We ignored the non-Gaussian amplitude since we will not use in the following the power parameter but each reflection parameter. Moreover, we consider each reflection parameters independently since we do not want to mix the actual reflection parameters of the clutter (of low order) and the reflection parameters (of high order) whose value should be estimated to $0$.\\
If one wants to take into account the amplitude distribution in a SIRV setting, the related metric will depend on the fixed amplitude distribution (see e.g. \cite{calvo02}).\\

The mean estimators have the robustness (R1) since they are independent with respect to the texture $\tau$ but they fail to be robust with respect to (R2). This is the reason of our interest into the replacement of the mean by a median.

\subsubsection{Robust Euclidean and Poincar\'e Median Burg}

The Fr\'echet median of $N$ points in a Riemannian manifold is usually defined by:
\begin{equation*}
\text{median}(\mu^{(1)},\mu^{(2)},...,\mu^{(N)}) = \arg\min_{|\mu|<1} \sum_{i=1}^N d(\mu,\mu^{(i)}).
\label{estimatormed}
\end{equation*}

The computations of means and medians for Poincar\'e metric presented above are available in \cite{arnaudon13}. In the Euclidean framework, the median can be computed through Weiszfeld algorithm (see \cite{vardi00} for a modification of Weiszfeld algorithm that is convergent for all initial point), which is initialized with a point $z_0\in\mathbb{C}$ and for $t\geq 0$
\[
z_{t+1} = \frac{\sum_{i=1}^N \mu^{(i)} / |z_t-\mu^{(i)}|}{\sum_{i=1}^N 1 / |z_t-\mu^{(i)}|}.
\]

The Poincar\'e metric will favor estimates close to the center by penalizing the angle inhomogeneity of the reflection parameters of the local estimates $\mu_m^{(1)},...,\mu_m^{(N)}$. This is illustrated by Fig. \ref{fig_samples} in Section \ref{section_simu}.\\

\subsection{2-step procedures}
\label{section_2step}
The idea of 2-step procedure is to use a first estimation and to select the ``best'' samples in order to remove spatial samples containing potential outliers. The choice of the discarded samples is made according to the distance of the estimated reflection parameter of each range case to the robust estimate
\[
d(\mu_m^{(1:N)},\hat\mu_m)\leq \dots \leq d(\mu_m^{(N:N)},\hat\mu_m).
\]
We then keep the only $N/2$ closest range cases that will be supposed to be statistically homogeneous. Unlike \cite{aubry12} where choice of such ``secondary data'' is made according to the so-called generalized inner product (GIP) $\x_i\hat\Sigma^{-1}\x_i$, our criterion is based on the Euclidean or Riemannian distance between reflection parameter (see Algorithm \ref{algo_2step}). Indeed, the GIP criterion depends on the power realizations and is then not robust with respect to (R1).

\subsection{Algorithm summary}

\begin{algorithm}[H]
\caption{Generalized Burg-Levinson algorithm}
\begin{algorithmic}
\State $\textbf{Aim}$ : Estimation of the power and reflection parameters $(P_0,\mu_1,...,\mu_M)$
\State $\textbf{Input}$ : a sample of $N$ vectors $(\x_1,..,\x_N)$ in $\mathbb{C}^d$, the order of the autoregressive process $M$

\State $P_0 = \frac{1}{Nd}\sum_{i=1}^N\sum_{k=1}^d |x_{ik}|^2$
\State For $1\leq i \leq N$ and $1\leq n \leq d$ , $\left\{\begin{array}{l}f_{i,0}(n) = x_{in}\\b_{i,0}(n) = x_{in}\end{array}\right.$
\State $\textbf{for  } m=1...M$
\State  \hspace{0.5cm} Estimation of $\hat\mu_{m}$ from $f_{m-1}$ and $b_{m-1}$ (through e.g. Normalized Burg estimator)
\State  \hspace{0.5cm} $P_m=(1-|\hat\mu_m|^2)P_{m-1}$
\State \hspace{0.5cm} $\left(\begin{array}{c} a_1^{(m)}\\ \vdots\\ a_{m-1}^{(m)}\end{array}\right) =\left(\begin{array}{c} a_1^{(m-1)}\\ \vdots\\ a_{m-1}^{(m-1)}\end{array}\right) +\hat\mu_{m}\left(\begin{array}{c} \overline{a}_{m-1}^{(m-1)}\\ \vdots\\ \overline{a}_1^{(m-1)}\end{array}\right) $
\State \hspace{0.5cm} $a_m^{(m)}=\hat\mu_m$
\State \hspace{0.5cm} Forward and backward errors for $1\leq i \leq N$ and $m+1\leq n \leq d$ ,  
\State \hspace{0.5cm}$\left\{\begin{array}{l}
f_{i,m}(n) =f_{i,m-1}(n) +\hat\mu_{m}b_{i,m-1}(n-1)\\
b_{i,m}(n) =b_{i,m-1}(n-1) +\overline{\hat\mu_{m}}f_{i,m-1}(n)
\end{array}\right.$
\State $\textbf{end}$
\end{algorithmic}
\label{algo_gburg}
\end{algorithm}

\begin{algorithm}
\caption{Normalized Burg}
\begin{algorithmic}
\State $\textbf{Aim}$ : Estimation of the $m$-th coefficient of reflection $\hat\mu_{m+1}$
\State $\textbf{Input}$ : forward and backward errors $f_{i,m}(n)$ and $b_{i,m}(n)$
\State $z = -\frac{2}{N(d-m-1)}\sum_{i=1}^N \sum_{n=m+2}^d  \frac{\overline{b_{i,m}(n-1)} f_{i,m}(n)}{|f_{i,m}(n)|^2 + |b_{i,m}(n-1)|^2}$
\State $B_1 = x\mapsto\frac{1-x^2}{x}\left(\frac{\log(1-x)-\log(1+x)}{2x} + \frac{1}{1-x^2}\right)$
\State $\hat\mu_{m+1} = B_1^{-1}(|z|)\frac{z}{|z|}$
\end{algorithmic}
\label{algo_nb}
\end{algorithm}

\begin{algorithm}
\caption{2-step Median Burg}
\begin{algorithmic}
\State $\textbf{Aim}$ :  Estimation of the reflection parameters $(\mu_1,...,\mu_M)$
\State $\textbf{Input}$ :  a sample of $N$ vectors $(\x_1,..,\x_N)$ in $\mathbb{C}^d$, the order of the autoregressive process $M$
\State For each range case $1\leq i\leq N$ compute the Gaussian Burg estimates $(\hat\mu_1^{(i)},...,\hat\mu_M^{(i)})$ (Eq. \ref{mugauss})
\State \textbf{for} m=1:M
\State \hspace{0.5cm} $\hat\mu_{m}^0 = \text{median}(\hat\mu_m^{(1)},...,\hat\mu_m^{(N)})$
\State \hspace{0.5cm} Order the reflection parameters with respect to the distance to $\hat\mu_{m}^0$
\State \hspace{0.5cm}  $d(\hat\mu_m^{(1:N)},\hat\mu_m^0)\leq...\leq d(\hat\mu_m^{(N:N)},\hat\mu_m^0)$
\State \hspace{0.5cm}  $\hat\mu_{m} = \text{median}(\hat\mu_m^{(1:N)},...,\hat\mu_m^{(N/2:N)})$
\State \textbf{end}
\end{algorithmic}
\label{algo_2step}
\end{algorithm}

\section{Simulations}
\label{section_simu}
\subsection{Simulated scenario}

As an illustration of  the performances of the defined algorithms, we modelize $N$ range cells of a clutter burst response $\z_1,...,\z_N$ through independent realization of the following random vector
\begin{equation}
\z_i\eqlaw \x_i+\w_i,
\label{model_clutter}
\end{equation}
where
\begin{itemize}
\item $\x\eqlaw \tau \y\in\mathbb{C}^d$ is a scale mixture of AR vectors; let us recall that $\tau$ is the texture and $\y$ the speckle (see Section \ref{section_mixtures}).
\item $\w\in\mathbb{C}^d$ is a white noise representing the thermal noise.
\end{itemize}
We choose a Weibull texture for the model of $\tau$ (considered for example in \cite{chong10}) for its adequacy with sea and ground clutters (see also \cite{conte04} for a validation of Weibull distribution on real data). A gamma-distributed texture corresponds to a K-distributed clutter which has been often proposed in the literature \cite{gini97,watts85}. However, we choose the Weibull one in order to model heavy-tailed clutters. We recall the expression of the density for a Weibull distribution : 
\begin{equation}
\text{for $x\geq 0$,   }\ \ \ \ \ \ \  f_{\tau}(x) = \frac{\nu}{\sigma} \left(\frac{x}{\sigma}\right)^{\nu-1} e^{-(x/\sigma)^{\nu}}.
\end{equation}
The scale parameter $\sigma$ is taken such that $\mathbb{E}[\tau]=\sigma\Gamma(1+1/\nu)$ is the desired clutter power whereas $\nu$ (taken equal to $0.6$ in order to modelize strong clutters) is the shape parameter representing the disparity of the distribution. We take $N=64$ samples and a speckle built from an AR vector of order 1 of parameter $\mu_1$ and of dimension $d$. An AR$(1)$ approximates a radar ground clutter or a wind clutter with a single Doppler frequency.\\
The Riemannian mean error (RME) for $N_{MC}$ estimations is a natural error metric in the space of positive definite matrices thanks to its affine invariance:
\begin{equation}
\text{RME} = \frac{1}{N_{MC}} \sum_{i=1}^{N_{MC}} \left\|   \log\left(\left(\hat\bSigma_i\right)^{-1/2} \bSigma_0\left(\hat\bSigma_i\right)^{-1/2}\right)\right\|_F
\end{equation}
where $\|.\|_F$ is the Frobenius norm. We will compare the following estimators of the scatter matrix :
\begin{itemize}
\item \textbf{(Multisegment) Gaussian Burg} : given by Eq. (\ref{mugauss}).
\item \textbf{Fixed Point} (FP) : the M-estimator proposed by Tyler \cite{tyler87}.
\item \textbf{(Multisegment) Normalized Burg} : estimator given by Equation (\ref{normburg}).
\item \textbf{Euclidean/Poincar\'e Mean Burg} : estimator given by Eq. (\ref{estimator2}) and (\ref{estimator}).
\item \textbf{Euclidean/Poincar\'e Median Burg} : estimator given in Section \ref{estimatormed}.
\item \textbf{2-step Euclidean/Poincar\'e Median Burg} : Algorithm \ref{algo_2step} by using Poincar\'e or Euclidean median and distances.
\item \textbf{2-step Fixed Point} : 2-step procedure (Section \ref{section_2step}) for Fixed Point algorithm with a selection of secondary data performed according to the likelihood of normalized samples.
\end{itemize}
The order of the above Burg estimators is taken to be maximal, i.e. $M=d-1$. Since the order of the simulated AR vector is $1$, this illustrates the robustness of the approach with respect to the choice of the order.\\
The estimation and detection performances will be compared to the following approaches: 
\begin{itemize}
\item a classical \textbf{OS-CFAR} used together with a Hamming window applied on outputs of Doppler Filters Bank; see for example \cite{rohling83}.
\item \textbf{Ideal} detection: we assume that the scatter matrix is known and use it for the test of Section \ref{sec_detection}. This constitutes a best-case performance benchmark for detection performances.
\end{itemize}

\subsection{Estimation quality}
\subsubsection{Robustness with respect to non-Gaussian amplitude}

Every tested estimator at the exception of Gaussian Burg is independent to the amplitude realizations, then we present a comparison of the estimation quality between Gaussian Burg and Normalized Burg with respect to the Weibull shape parameter in Table \ref{table_error0}.\\
Note that the Gaussian distribution corresponds to the limit case $\nu\rightarrow \infty$. This is the reason of the good behavior of Gaussian Burg estimates in Table \ref{table_error0} for large $\nu$. Moreover, as expected, the Gaussian Burg is largely outperformed by its Normalized version when the texture is sub-exponential ($\nu<1$), i.e. heavy-tailed.\\

\subsubsection{Influence of the number of pulses per range case}
In Table \ref{table_error1}, the superiority in terms of performance of the Normalized Burg with respect to Euclidean and Poincar\'e Mean Burg can be explained by the bias of the latter that is important when $d$ is small. Indeed, the bias of Mean Burg algorithms does not depend on $N$ contrary to Normalized Burg for which the performances would have been the same if we had considered a single temporal sequence of length $dN$. The same conclusion for the Multisegment estimate in the Gaussian case was given in \cite{waele00}.\\
The precision of Normalized Burg is deteriorated when $d$ increases since the order of the estimated autoregressive model (equal to $d-1$) increases with $d$. For large $d$, it is then useful to consider Mean Burg. However, since we are interested in contaminated scenarios where $d$ is small, we will restrict ourselves to the case $d=12$ it in the following.

\begin{table}[!t]
\center
%\small
\caption{Riemannian Mean Error for an AR(1) ($\mu_1=0.9$, $d=8$)} \begin{tabular}{|c|c|c|c|c|c|c|}
   \hline
&\parbox{3.5cm}{\centerline{Normalized Burg}} & \parbox{3.5cm}{\centerline{Gaussian Burg}} \\
\hline
$\nu=0.1$ & $0.42$ & $3.88$\\
$\nu=0.5$ & $0.42$ & $1.49$\\
$\nu=1$ & $0.42$ & $0.73$\\
$\nu=2$ & $0.42$ & $0.48$\\
$\nu=3$ & $0.42$ & $0.41$\\
$\nu=10$ & $0.42$ & $0.36$\\
\hline
\end{tabular}
\label{table_error0}
\end{table}

\begin{table}[!t]
\center
%\small
\caption{Riemannian Mean Error for an AR(1) ($\mu_1=0.9$, $\nu$ has no impact)} \begin{tabular}{|c|c|c|c|}
   \hline
&\parbox{3.5cm}{\centerline{Normalized Burg}} & \parbox{3.5cm}{\centerline{Euclidean Mean Burg}} & \parbox{3.5cm}{\centerline{Poincar\'e Mean Burg}} \\% Poincar\'e Mean Burg
\hline
$d=8$ & $0.42$ & $0.77$ & $ 0.76$\\
$d=16$ & $0.44$ & $0.63$ & $0.63$\\
$d=32$ & $0.47$ & $0.55$ & $0.55$\\
$d=64$ & $0.50$ & $0.49$ &  $0.49$\\
\hline
\end{tabular}
\label{table_error1}
\end{table}

\subsubsection{Estimation quality illustrated through a transition scenario}
\label{section_est}
\begin{table}[!t]
\center
%\small
\caption{Riemannian Mean Error for an AR(1) ($\mu_1=0.9$, $d=12$); these errors are independent of the shape parameter of the Weibull texture. We progressively increase the number of contaminating range cells (outliers).} \begin{tabular}{|c|c|c|c|c|c|c|}
   \hline
&\parbox{2.5cm}{\centerline{Normalized Burg}}  & \parbox{2.5cm}{Euclidean\\ Median Burg} & \parbox{2.5cm}{{2-step\\Euclidean  Median Burg}} & \parbox{2.5cm}{Poincar\'e\\ Median Burg} & \parbox{2.3cm}{\centerline{FP}} \\
\hline
$0$ outlier & $\bf 0.42$  &  $0.57$            & $1.01$ & $0.78$ & $0.70$ \\
$5$ outliers & $1.08$  & $\bf 0.64$                 & $0.98$ & $0.90$ & $0.78$ \\
$10$ outliers & $2.03$ & $\bf 0.77$               & $0.92$ & $1.04$ & $1.30$ \\
$20$ outliers & $3.17$ & $1.10$               & $\bf 0.87$ & $1.41$ & $2.70$ \\
$30$ outliers & $3.77$ & $1.96$               & $\bf 0.87$ & $2.56$ & $3.71$ \\
\hline
\end{tabular}
\label{table_error3}
\end{table}

\begin{table}[!t]
\center
%\small
\caption{Riemannian Mean Error for an AR(1) ($\mu_1=0.3$, $d=12$); these errors are independent of the shape parameter of the Weibull texture. We progressively increase the number of contaminating range cells (outliers).} \begin{tabular}{|c|c|c|c|c|c|c|}
   \hline
&\parbox{2.5cm}{\centerline{Normalized Burg}}  & \parbox{2.5cm}{Euclidean\\ Median Burg} & \parbox{2.5cm}{{2-step\\ Euclidean Median Burg}} & \parbox{2.5cm}{Poincar\'e\\ Median Burg} & \parbox{2.3cm}{\centerline{FP}} \\
\hline
$0$ outlier &0.34  & 0.38           & 0.90 &\bf 0.30 & 0.70 \\
$5$ outliers & 0.35  &  0.38            & 0.85 &\bf 0.33 & 0.70 \\
$10$ outliers & 0.41  &  0.41            & 0.85 &\bf 0.37 & 0.70 \\
$20$ outliers & 0.59  &  0.51            & 0.85 &\bf 0.49 & 0.76 \\
$30$ outliers & 0.71  &  0.67            & 0.97 &\bf 0.64 & 0.87 \\
\hline
\end{tabular}
\label{table_error4}
\end{table}

\begin{figure}
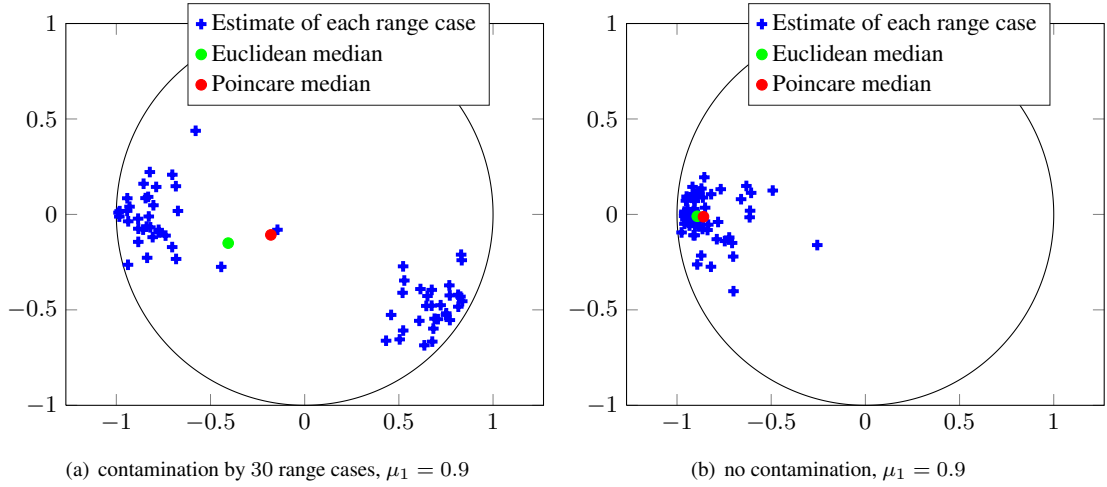

  \centering
   \subfigure[contamination by $30$ range cases, $\mu_1=0.9$]{ \small\input{samples_09.tikz}}
   \subfigure[no contamination, $\mu_1=0.9$]{ \small\input{samples_nc_09.tikz}}
%   \subfigure[$\mu_1=0.3$]{ \input{samples_03.tikz}}
 \caption{Estimated first coefficient of reflection for each range and their Riemannian and Euclidean medians in case of a contamination by $30$ range cases.}
 \label{fig_samples}
\end{figure}

%\begin{figure}[!t]
%\centering
%\includegraphics[width=3.5in]{clutter_change}
%\caption{Simulated clutter spectra for $100$ range cases}
%\label{fig_clutterchange}
%\end{figure}

%\begin{figure}[!t]
%\centering
%\includegraphics[width=3.5in,height=3in]{spectres}
%\caption{Estimated spectra for the scenario illustrated by the figure \ref{fig_clutterchange} of (top) Fixed Point (middle) Geodesic Median Burg (bottom) Medianized estimator given by Equation (\ref{medianization}) for Normalized Burg estimation on 8 subwindows}
%\label{fig_fp}
%\end{figure}

\begin{figure*}[!t]
\centering
\subfigure[Simulated spectra]{\label{fig_clutterchange}\includegraphics[width=1.85in,height=1.2in]{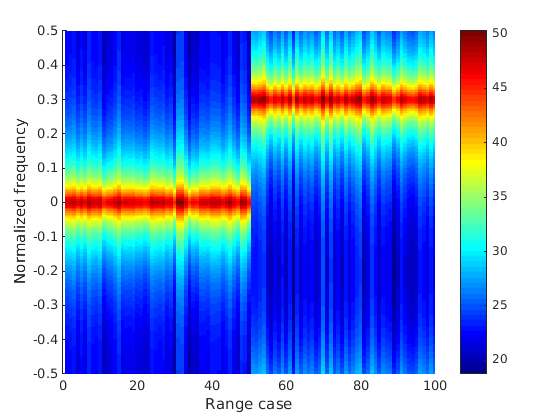}}
\subfigure[Normalized Burg]{\includegraphics[width=1.85in,height=1.2in]{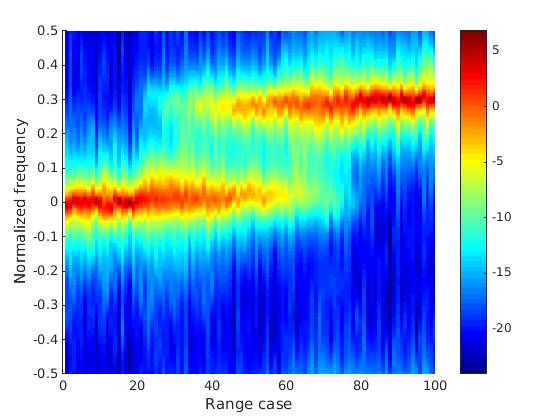}}
\subfigure[Fixed Point]{\includegraphics[width=1.85in,height=1.2in]{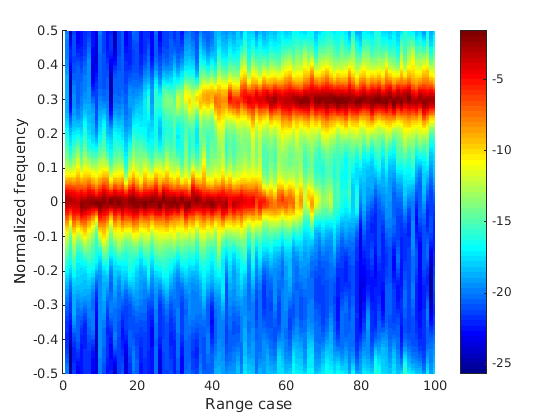}}\\
\subfigure[OS-CFAR]{\includegraphics[width=1.85in,height=1.2in]{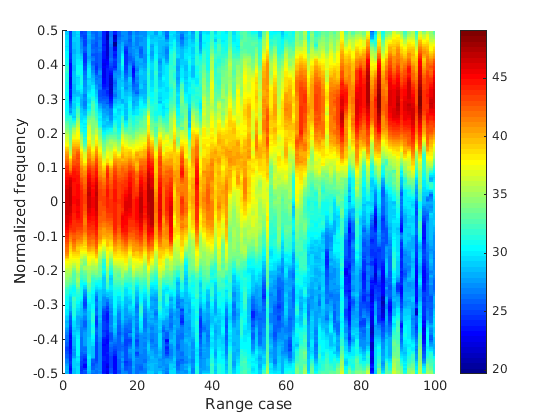}}
\subfigure[Poincar\'e Median Burg]{\includegraphics[width=1.85in,height=1.2in]{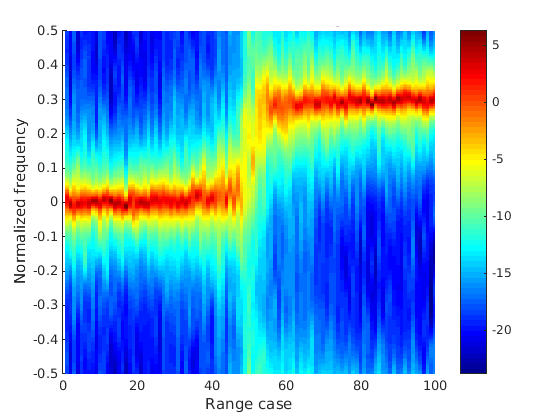}}
\subfigure[Euclidean Median Burg]{\includegraphics[width=1.85in,height=1.2in]{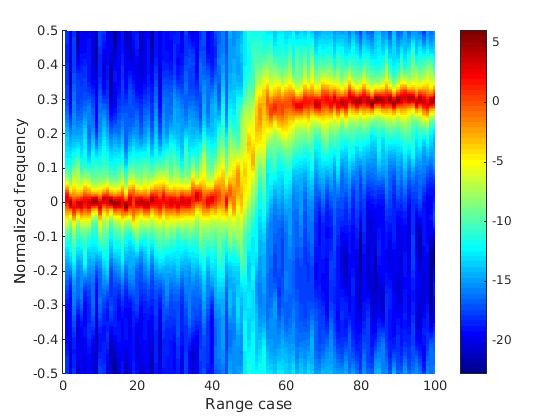}}\\
\subfigure[2-step Poincar\'e Median Burg]{\includegraphics[width=1.85in,height=1.2in]{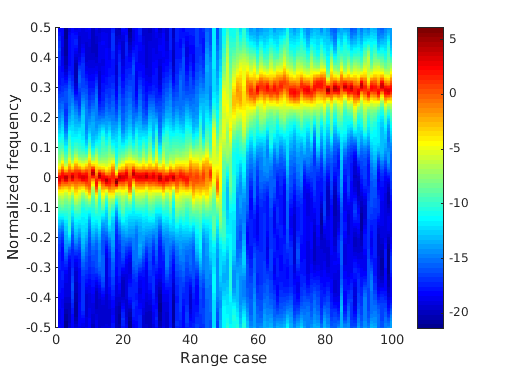}}
\subfigure[2-step Euclidean Median Burg]{\includegraphics[width=1.85in,height=1.2in]{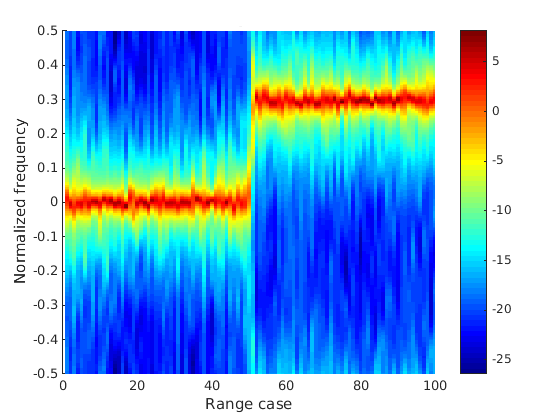}}
\subfigure[2-step Fixed Point]{\label{fig2stepfp}\includegraphics[width=1.85in,height=1.2in]{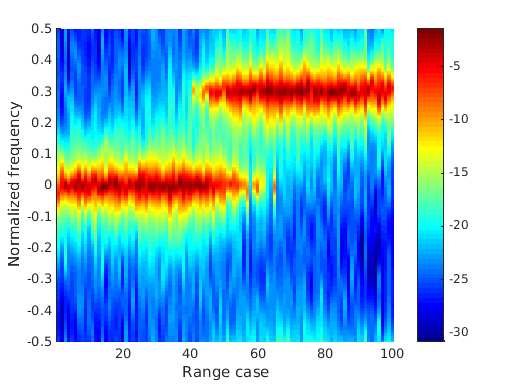}}
\caption{Estimated and simulated spectra for $100$ range cases (The x-axis corresponds to the range dimension whereas y-axis indicates the normalized frequency).}
\label{fig5}
\end{figure*}

We summarize in Table \ref{table_error3} and \ref{table_error4} the estimation errors for two scenarios ($\mu_1=0.9$ and $\mu_1=0.3$).
\begin{itemize}
\item In the non contaminated case, Normalized Burg show better accuracy than FP: taking into account the Toeplitz structure of the scatter matrix then improves the estimation quality.
\item When the spectrum is flatter ($|\mu_1|=0.3$), the Poincar\'e metric that favors the small coefficients is slightly more efficient than the Euclidean one. Indeed, the reflection coefficients of high order are then closer to $0$. For reflection parameters of high modulus however, this behavior makes Poincar\'e metric less efficient especially in the contaminated cases (see Fig. \ref {fig_samples}).
\item The 2-step procedure drastically increases the estimation quality in the case of a strong contamination and a strong correlation ($\mu_1=0.9$). In that case, the outliers coming from the ``true'' distribution are well separated from the correlation samples coming from the perturbing distribution and the 2-step procedure can then easily separate the two parts of the sample. Otherwise, when $|\mu_1|$ is low, this separation is less clear. 
\item The surprising decrease of the error in the 2-step procedure when the amount of contamination increases may be explained by the fact that the higher order reflection coefficient are better and better estimated thanks to the diversity brought by the contamination.
\end{itemize}

In Fig. \ref{fig5}, we illustrate the robustness of each estimator through a clutter transition. We considered a scenario where range cases $1$ to $50$ are simulated through an AR(1) of parameter $\mu_1=0.9$ and range cases $51$ to $100$ are simulated with an AR(1) of parameter $\mu_1=0.9e^{0.3\times 2i\pi}$ (Fig. \ref{fig_clutterchange}). For each test range case $\x_i$ ($i$ is represented in the $x$-axis), the represented Doppler spectrum results from the estimated covariance of the $N=64$ neighbor cells $\x_{i-32},...,\x_{i-1},\x_{i+1},...,\x_{i+32}$. In order to control edge effects, we consider only available neighbor cells for $32$ first and $32$ last cells.
\begin{itemize}
\item For the non robust estimators (namely Normalized Burg and Fixed-Point) the estimated spectra have two frequencies for cases around the transition which is not the case for the other estimators. 
\item For the robust estimators, the number of range cases where the estimated spectrum is not accurate is respectively for OS-CFAR, Poincar\'e Median Burg, Euclidean Median Burg and their respective 2 step versions of $16$, $8$, $8$ , $8$ and $2$ cases. With this property, the detection of a target with a normalized frequency of $0.3$ is possible in an area close to the transition for robust estimators. 
\item The 2-step procedure alone is not sufficient  if the first estimation is not robust: this is illustrated by Fig. \ref{fig2stepfp} where the secondary data selection after a first Fixed-Point estimation is not sufficient to separate the two clutter frequencies.
\item It can be observed that the spectra of OS-CFAR show a frequency resolution worse than its competitors. This is due to the low number of pulses ($d=12$) for each range cells which is responsible for the low number of filters in OS-CFAR.
\end{itemize}

\subsubsection{Estimation quality for a sea clutter scenario}

\begin{figure*}[!t]
\centering
\subfigure[Simulated spectra of neighbor range cases]{\label{fig_sweep}\includegraphics[width=2.8in]{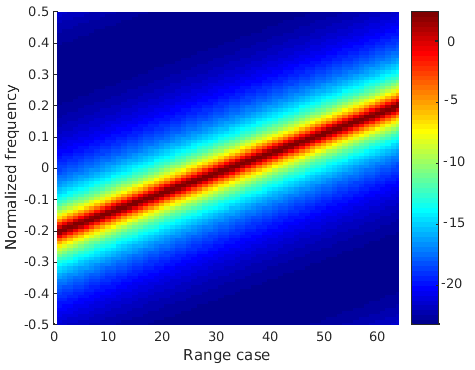}}\\
%\subfigure[Estimated ``mean'' spectra]{\label{fig_sweep2}\includegraphics[width=2.4in]{sc1_spectres2}}
%\subfigure[Simulated spectra of neighbor range cases]{\label{fig_sweep}\input{spectres_simules.tikz}}
\subfigure[Estimated ``mean'' spectra of non-robust estimators]{\label{fig_sweep3}\scriptsize\input{sc1_spectres2.tikz}}
\subfigure[Estimated ``mean'' spectra of robust estimators]{\label{fig_sweep2}\scriptsize\input{sc1_spectres.tikz}}
\caption{Simulated and estimated spectrum in a sea clutter typical scenario in a Gaussian context; the simulated autoregressive process is an AR(1) with $\mu_1=0.9$.}
\label{figcluttersea}
\end{figure*}

\begin{figure*}[!t]
\centering
\subfigure[Simulated spectra of neighbor range cases]{\label{fig_sweepb}\includegraphics[width=2.8in]{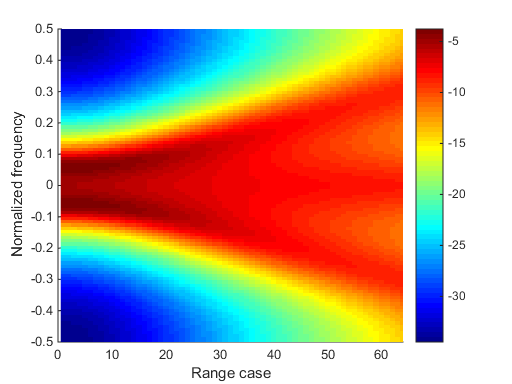}}\\
\subfigure[Estimated ``mean'' spectra of non-robust estimators]{\label{fig_sweep2b}\scriptsize\input{sc2_spectres.tikz}}
\subfigure[Estimated ``mean'' spectra of robust estimators]{\label{fig_sweep3b}\scriptsize\input{sc2_spectres2.tikz}}
\caption{Simulated and estimated spectrum in a sea clutter typical scenario in a Gaussian context; the simulated autoregressive process is an AR(3).}
\label{figcluttersea2}
\end{figure*}

In Fig. \ref{figcluttersea}-\ref{figcluttersea2}, we simulate a scenario encountered when we face sea clutter, namely the position of the ``peak'' in the spectra (respectively the spectral width for Fig. \ref{figcluttersea2}) of neighbor range cases is not stable and can be drifting (Fig. \ref{fig_sweep}). Ideally, the estimated ``mean'' spectra should correspond to a mean behavior, spectrally speaking: the position of the peak should be the mean of the neighbor peaks as well as the spectral width.
\begin{itemize}
\item In Fig. \ref{figcluttersea}, since Fixed Point and Normalized Burg estimators take into account every neighbor case with the same weights, the estimated spectrum is wider, this width representing the incertitude on the position of the peak. On the other-hand, the median-based estimators are only dependent on the considered Riemannian geometry in the space of reflection parameters. Indeed, with our choice of geometry, the more diversity in the parameters, the lower the absolute value of the median of these parameters and then the larger the spectrum. Since the 2-step Euclidean Median estimator takes into account less neighbor cases, the diversity is weaker and then, the accuracy of the estimated spectrum is higher. On the other hand, 2-step Poincar\'e Median estimator is sensitive to highest order reflection parameters that are more biased if we consider less range cases; this effect is moreover not sufficiently compensated by the first reflection parameters.
\item Similarly, in Fig. \ref{figcluttersea2}, robust estimators estimate more accurately the width of the ``average'' spectrum while non-robust estimators under-estimate it.
\end{itemize}

\subsection{Detection quality}
\label{sec_detection}

We will now compare the estimators through their detection performances (for the sake of clarity, we restrain ourselves to Normalized Burg, 2-step Burg estimators, Fixed-Point and OS-CFAR). For that purpose, we assume that a cell under test is spatially surrounded by $N$ neighbor cells sharing the same distribution or not (depending on the scenario). We suppose that a target is present in the cell under test. As for the target model, we consider small targets in the sense that they are present in only one range cell which burst response is simulated by
\begin{equation}
\z\eqlaw \alpha\p + \x+\w,
\end{equation}
where $\alpha$ represents the target power,  $\p=(1, e^{2i\pi f_D},...,e^{2i\pi (d-1)f_D})^T$, $f_D$ the normalized Doppler frequency and $\x$ and $\w$ are defined as in Eq. (\ref{model_clutter}).\\
Several test statistics dedicated to target detection in a non-Gaussian environment have been proposed in the radar literature. A detector classically used is the GLRT (Generalized Likelihood Ratio Test), also called ANMF (see e.g. \cite{gini97}). A variant of the GLRT detector has been proposed in \cite{alfano04} in the context of targets spread over several range cells present in an homogeneous or inhomogeneous clutter modelized as an autoregressive process.\\
We will use in the sequel the GLRT statistics and propose to compare its performances with a new geometrical detector based on the geometry of reflection parameters.

\subsubsection{GLRT detector}
Denoting by $\hat\bSigma$ one estimator of the scatter matrix of the neighbor cells, the GLRT detector is defined by
\begin{equation}
\text{GLRT}(\z) = \max_{\theta\in[-0.5;0.5[} \frac{|\p(\theta)^* \hat\bSigma^{-1}\z|^2}{(\z^*\hat\bSigma^{-1}\z)(\p(\theta)^*\hat\bSigma^{-1}\p(\theta))}
\end {equation}
with $\p(\theta)=(1, e^{2i\pi\theta},...,e^{2i\pi (d-1)\theta})^T$ the steering vector and $\z\in\mathbb{C}^d$ the data of the cell under test. We compute the test threshold such that the probability of false alarm is set to $10^{-3}$ and compare the probabilities of detection with the classical OS-CFAR test \cite{rohling83}.\\

%\begin{remark}
%The GLRT detector does not take into account whether the normalized frequency $\theta^*$ realizing the maximum corresponds to its true value. The probabilities of detection computed in the simulations then incorporate detections of targets at wrong normalized frequencies.
%\end{remark}

%
%\begin{figure*}[!t]
%\subfigure[No outliers]{\includegraphics[width=2.4in]{brouillonseuilunique2_06_50_50_0}}
%\subfigure[$10$ contaminating range cells with normalized frequency $0.3$]{\includegraphics[width=2.4in]{brouillonseuilunique2_06_50_50_10}}
%\subfigure[$20$ contaminating range cells with normalized frequency $0.3$]{\includegraphics[width=2.4in]{brouillonseuilunique2_06_50_50_20}}
%\caption{Probability of detection of a target of SNR=$40$dB with a clutter CNR=$40$dB of correlation $\mu_1=0.9$ in function of the normalized frequency of the inserted target for PFA = $10^{-3}$ (GLRT detector)).}
%\label{detection}
%\end{figure*}

%\begin{figure*}[!t]
%\subfigure[No outliers]{\includegraphics[width=2.4in]{brouillonseuilunique08_50_50_0}}
%\subfigure[$10$ contaminating range cells with normalized frequency $0.3$]{\includegraphics[width=2.4in]{brouillonseuilunique08_50_50_10}}
%\subfigure[$20$ contaminating range cells with normalized frequency $0.3$]{\includegraphics[width=2.4in]{brouillonseuilunique08_50_50_20}}
%\caption{Probability of detection of a target of SNR=$40$dB with a clutter CNR=$40$dB of correlation $\mu_1=0.8$ in function of the normalized frequency of the inserted target for PFA = $10^{-3}$ (GLRT detector).}
%\label{detection08}
%\end{figure*}

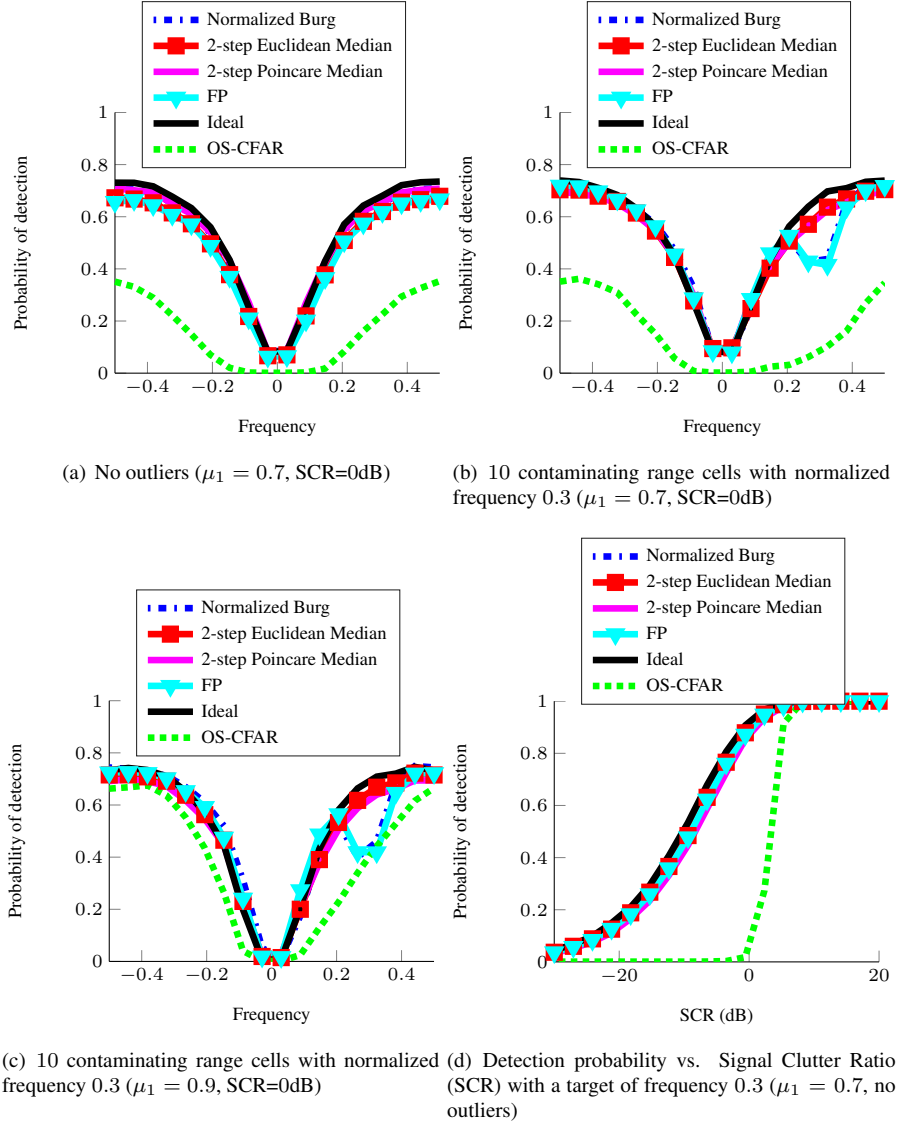
\begin{figure*}[!t]
\centering
\subfigure[No outliers ($\mu_1=0.7$, SCR=$0$dB)]{\scriptsize% This file was created by matlab2tikz.
% Minimal pgfplots version: 1.3
%
%The latest updates can be retrieved from
%  http://www.mathworks.com/matlabcentral/fileexchange/22022-matlab2tikz
%where you can also make suggestions and rate matlab2tikz.
%
\definecolor{mycolor1}{rgb}{1.00000,0.00000,1.00000}%
\definecolor{mycolor2}{rgb}{0.00000,1.00000,1.00000}%
\begin{tikzpicture}

\begin{axis}[%
width=1.7in,
height=1.365625in,
at={(0.758333in,0.48125in)},
scale only axis,
xmin=-0.5,
xmax=0.5,
xlabel={Frequency},
ymin=0,
ymax=1,
ylabel={Probability of detection},
title style={font=\bfseries},
axis x line*=bottom,
axis y line*=left,
legend style={at={(0.082602,0.78622)},anchor=south west,legend cell align=left,align=left,draw=white!15!black}
]
\addplot [color=blue,dash pattern=on 1pt off 3pt on 3pt off 3pt,line width=2.5pt]
  table[row sep=crcr]{%
-0.5	0.705\\
-0.441176470588235	0.7046\\
-0.382352941176471	0.6942\\
-0.323529411764706	0.6502\\
-0.264705882352941	0.6168\\
-0.205882352941176	0.536\\
-0.147058823529412	0.4206\\
-0.0882352941176471	0.2468\\
-0.0294117647058824	0.0814\\
0.0294117647058824	0.0884\\
0.0882352941176471	0.2398\\
0.147058823529412	0.4154\\
0.205882352941177	0.5488\\
0.264705882352941	0.62\\
0.323529411764706	0.6522\\
0.382352941176471	0.69\\
0.441176470588235	0.706\\
0.5	0.7096\\
};
\addlegendentry{Normalized Burg};

\addplot [color=red,solid,line width=2.5pt,mark=square,mark options={solid}]
  table[row sep=crcr]{%
-0.5	0.6726\\
-0.441176470588235	0.6678\\
-0.382352941176471	0.6534\\
-0.323529411764706	0.6148\\
-0.264705882352941	0.573\\
-0.205882352941176	0.496\\
-0.147058823529412	0.3778\\
-0.0882352941176471	0.2174\\
-0.0294117647058824	0.0668\\
0.0294117647058824	0.0702\\
0.0882352941176471	0.2196\\
0.147058823529412	0.377\\
0.205882352941177	0.509\\
0.264705882352941	0.5824\\
0.323529411764706	0.62\\
0.382352941176471	0.6556\\
0.441176470588235	0.6654\\
0.5	0.6784\\
};
\addlegendentry{2-step Euclidean Median};

\addplot [color=mycolor1,solid,line width=2.5pt]
  table[row sep=crcr]{%
-0.5	0.7086\\
-0.441176470588235	0.7016\\
-0.382352941176471	0.6928\\
-0.323529411764706	0.6504\\
-0.264705882352941	0.6102\\
-0.205882352941176	0.5396\\
-0.147058823529412	0.4306\\
-0.0882352941176471	0.2712\\
-0.0294117647058824	0.0812\\
0.0294117647058824	0.0852\\
0.0882352941176471	0.2696\\
0.147058823529412	0.4276\\
0.205882352941177	0.5506\\
0.264705882352941	0.6228\\
0.323529411764706	0.6552\\
0.382352941176471	0.6926\\
0.441176470588235	0.7032\\
0.5	0.7078\\
};
\addlegendentry{2-step Poincare Median};

\addplot [color=mycolor2,solid,line width=2.5pt,mark=triangle,mark options={solid,rotate=180}]
  table[row sep=crcr]{%
-0.5	0.6568\\
-0.441176470588235	0.664\\
-0.382352941176471	0.6442\\
-0.323529411764706	0.6072\\
-0.264705882352941	0.5706\\
-0.205882352941176	0.488\\
-0.147058823529412	0.3694\\
-0.0882352941176471	0.2098\\
-0.0294117647058824	0.0632\\
0.0294117647058824	0.065\\
0.0882352941176471	0.2018\\
0.147058823529412	0.3646\\
0.205882352941177	0.5054\\
0.264705882352941	0.5738\\
0.323529411764706	0.618\\
0.382352941176471	0.6512\\
0.441176470588235	0.6602\\
0.5	0.6666\\
};
\addlegendentry{FP};

\addplot [color=black,solid,line width=2.5pt]
  table[row sep=crcr]{%
-0.5	0.7306\\
-0.441176470588235	0.7302\\
-0.382352941176471	0.7166\\
-0.323529411764706	0.677\\
-0.264705882352941	0.6332\\
-0.205882352941176	0.56\\
-0.147058823529412	0.4344\\
-0.0882352941176471	0.254\\
-0.0294117647058824	0.0812\\
0.0294117647058824	0.0852\\
0.0882352941176471	0.2538\\
0.147058823529412	0.43\\
0.205882352941177	0.5712\\
0.264705882352941	0.6426\\
0.323529411764706	0.6804\\
0.382352941176471	0.721\\
0.441176470588235	0.7322\\
0.5	0.735\\
};
\addlegendentry{Ideal};

\addplot [color=green,dotted,line width=2.5pt]
  table[row sep=crcr]{%
-0.5	0.351\\
-0.441176470588235	0.331\\
-0.382352941176471	0.2894\\
-0.323529411764706	0.2214\\
-0.264705882352941	0.1488\\
-0.205882352941176	0.0702\\
-0.147058823529412	0.0206\\
-0.0882352941176471	0.0026\\
-0.0294117647058824	0.0012\\
0.0294117647058824	0.0016\\
0.0882352941176471	0.0028\\
0.147058823529412	0.0176\\
0.205882352941177	0.0832\\
0.264705882352941	0.1604\\
0.323529411764706	0.2256\\
0.382352941176471	0.2938\\
0.441176470588235	0.3262\\
0.5	0.3532\\
};
\addlegendentry{OS-CFAR};

\end{axis}
\end{tikzpicture}%}
\subfigure[$10$ contaminating range cells with normalized frequency $0.3$ ($\mu_1=0.7$, SCR=$0$dB)]{\scriptsize% This file was created by matlab2tikz.
% Minimal pgfplots version: 1.3
%
%The latest updates can be retrieved from
%  http://www.mathworks.com/matlabcentral/fileexchange/22022-matlab2tikz
%where you can also make suggestions and rate matlab2tikz.
%
\definecolor{mycolor1}{rgb}{1.00000,0.00000,1.00000}%
\definecolor{mycolor2}{rgb}{0.00000,1.00000,1.00000}%
\begin{tikzpicture}

\begin{axis}[%
width=1.7in,
height=1.365625in,
at={(0.758333in,0.48125in)},
scale only axis,
xmin=-0.5,
xmax=0.5,
xlabel={Frequency},
ymin=0,
ymax=1,
ylabel={Probability of detection},
title style={font=\bfseries},
axis x line*=bottom,
axis y line*=left,
legend style={at={(0.082602,0.78622)},anchor=south west,legend cell align=left,align=left,draw=white!15!black}
]
\addplot [color=blue,dash pattern=on 1pt off 3pt on 3pt off 3pt,line width=2.5pt]
  table[row sep=crcr]{%
-0.5	0.7208\\
-0.441176470588235	0.7196\\
-0.382352941176471	0.698\\
-0.323529411764706	0.6708\\
-0.264705882352941	0.627\\
-0.205882352941176	0.5728\\
-0.147058823529412	0.4728\\
-0.0882352941176471	0.3276\\
-0.0294117647058824	0.0842\\
0.0294117647058824	0.0794\\
0.0882352941176471	0.2504\\
0.147058823529412	0.4292\\
0.205882352941177	0.5116\\
0.264705882352941	0.4266\\
0.323529411764706	0.438\\
0.382352941176471	0.6536\\
0.441176470588235	0.7144\\
0.5	0.725\\
};
\addlegendentry{Normalized Burg};

\addplot [color=red,solid,line width=2.5pt,mark=square,mark options={solid}]
  table[row sep=crcr]{%
-0.5	0.7034\\
-0.441176470588235	0.7022\\
-0.382352941176471	0.679\\
-0.323529411764706	0.6586\\
-0.264705882352941	0.6142\\
-0.205882352941176	0.545\\
-0.147058823529412	0.444\\
-0.0882352941176471	0.277\\
-0.0294117647058824	0.0944\\
0.0294117647058824	0.0972\\
0.0882352941176471	0.2478\\
0.147058823529412	0.4026\\
0.205882352941177	0.5066\\
0.264705882352941	0.571\\
0.323529411764706	0.6368\\
0.382352941176471	0.6682\\
0.441176470588235	0.6978\\
0.5	0.704\\
};
\addlegendentry{2-step Euclidean Median};

\addplot [color=mycolor1,solid,line width=2.5pt]
  table[row sep=crcr]{%
-0.5	0.6952\\
-0.441176470588235	0.6892\\
-0.382352941176471	0.6758\\
-0.323529411764706	0.6514\\
-0.264705882352941	0.6056\\
-0.205882352941176	0.5482\\
-0.147058823529412	0.4466\\
-0.0882352941176471	0.298\\
-0.0294117647058824	0.0842\\
0.0294117647058824	0.0794\\
0.0882352941176471	0.2874\\
0.147058823529412	0.4134\\
0.205882352941177	0.501\\
0.264705882352941	0.5646\\
0.323529411764706	0.6254\\
0.382352941176471	0.6554\\
0.441176470588235	0.686\\
0.5	0.6958\\
};
\addlegendentry{2-step Poincare Median};

\addplot [color=mycolor2,solid,line width=2.5pt,mark=triangle,mark options={solid,rotate=180}]
  table[row sep=crcr]{%
-0.5	0.7192\\
-0.441176470588235	0.717\\
-0.382352941176471	0.6974\\
-0.323529411764706	0.6656\\
-0.264705882352941	0.6248\\
-0.205882352941176	0.5638\\
-0.147058823529412	0.4548\\
-0.0882352941176471	0.2872\\
-0.0294117647058824	0.0842\\
0.0294117647058824	0.0794\\
0.0882352941176471	0.2846\\
0.147058823529412	0.4598\\
0.205882352941177	0.527\\
0.264705882352941	0.4282\\
0.323529411764706	0.4154\\
0.382352941176471	0.6358\\
0.441176470588235	0.7014\\
0.5	0.718\\
};
\addlegendentry{FP};

\addplot [color=black,solid,line width=2.5pt]
  table[row sep=crcr]{%
-0.5	0.739\\
-0.441176470588235	0.733\\
-0.382352941176471	0.7108\\
-0.323529411764706	0.6834\\
-0.264705882352941	0.632\\
-0.205882352941176	0.564\\
-0.147058823529412	0.4496\\
-0.0882352941176471	0.2698\\
-0.0294117647058824	0.0842\\
0.0294117647058824	0.0794\\
0.0882352941176471	0.259\\
0.147058823529412	0.4362\\
0.205882352941177	0.5632\\
0.264705882352941	0.6386\\
0.323529411764706	0.6972\\
0.382352941176471	0.7096\\
0.441176470588235	0.734\\
0.5	0.7386\\
};
\addlegendentry{Ideal};

\addplot [color=green,dotted,line width=2.5pt]
  table[row sep=crcr]{%
-0.5	0.3512\\
-0.441176470588235	0.362\\
-0.382352941176471	0.341\\
-0.323529411764706	0.309\\
-0.264705882352941	0.2256\\
-0.205882352941176	0.1508\\
-0.147058823529412	0.0564\\
-0.0882352941176471	0.008\\
-0.0294117647058824	0.0022\\
0.0294117647058824	0.0026\\
0.0882352941176471	0.0056\\
0.147058823529412	0.0248\\
0.205882352941177	0.0316\\
0.264705882352941	0.0618\\
0.323529411764706	0.1026\\
0.382352941176471	0.1614\\
0.441176470588235	0.2684\\
0.5	0.3464\\
};
\addlegendentry{OS-CFAR};

\end{axis}
\end{tikzpicture}%}
\subfigure[$10$ contaminating range cells with normalized frequency $0.3$ ($\mu_1=0.9$, SCR=$0$dB)]{\scriptsize% This file was created by matlab2tikz.
% Minimal pgfplots version: 1.3
%
%The latest updates can be retrieved from
%  http://www.mathworks.com/matlabcentral/fileexchange/22022-matlab2tikz
%where you can also make suggestions and rate matlab2tikz.
%
\definecolor{mycolor1}{rgb}{1.00000,0.00000,1.00000}%
\definecolor{mycolor2}{rgb}{0.00000,1.00000,1.00000}%
\begin{tikzpicture}

\begin{axis}[%
width=1.7in,
height=1.365625in,
at={(0.758333in,0.48125in)},
scale only axis,
xmin=-0.5,
xmax=0.5,
xlabel={Frequency},
ymin=0,
ymax=1,
ylabel={Probability of detection},
title style={font=\bfseries},
axis x line*=bottom,
axis y line*=left,
legend style={at={(0.082602,0.78622)},anchor=south west,legend cell align=left,align=left,draw=white!15!black}
]
\addplot [color=blue,dash pattern=on 1pt off 3pt on 3pt off 3pt,line width=2.5pt]
  table[row sep=crcr]{%
-0.5	0.744\\
-0.441176470588235	0.7384\\
-0.382352941176471	0.726\\
-0.323529411764706	0.7124\\
-0.264705882352941	0.6692\\
-0.205882352941176	0.6102\\
-0.147058823529412	0.52\\
-0.0882352941176471	0.3118\\
-0.0294117647058824	0.0518\\
0.0294117647058824	0.0102\\
0.0882352941176471	0.1748\\
0.147058823529412	0.4212\\
0.205882352941177	0.5312\\
0.264705882352941	0.4\\
0.323529411764706	0.4594\\
0.382352941176471	0.6782\\
0.441176470588235	0.7494\\
0.5	0.7442\\
};
\addlegendentry{Normalized Burg};

\addplot [color=red,solid,line width=2.5pt,mark=square,mark options={solid}]
  table[row sep=crcr]{%
-0.5	0.7154\\
-0.441176470588235	0.7156\\
-0.382352941176471	0.708\\
-0.323529411764706	0.6912\\
-0.264705882352941	0.6376\\
-0.205882352941176	0.563\\
-0.147058823529412	0.465\\
-0.0882352941176471	0.2302\\
-0.0294117647058824	0.019\\
0.0294117647058824	0.0148\\
0.0882352941176471	0.2002\\
0.147058823529412	0.3906\\
0.205882352941177	0.533\\
0.264705882352941	0.6178\\
0.323529411764706	0.6682\\
0.382352941176471	0.6848\\
0.441176470588235	0.7222\\
0.5	0.7152\\
};
\addlegendentry{2-step Euclidean Median};

\addplot [color=mycolor1,solid,line width=2.5pt]
  table[row sep=crcr]{%
-0.5	0.6916\\
-0.441176470588235	0.6988\\
-0.382352941176471	0.6912\\
-0.323529411764706	0.6728\\
-0.264705882352941	0.6178\\
-0.205882352941176	0.5468\\
-0.147058823529412	0.4464\\
-0.0882352941176471	0.2214\\
-0.0294117647058824	0.023\\
0.0294117647058824	0.0278\\
0.0882352941176471	0.2112\\
0.147058823529412	0.3786\\
0.205882352941177	0.507\\
0.264705882352941	0.588\\
0.323529411764706	0.6398\\
0.382352941176471	0.662\\
0.441176470588235	0.694\\
0.5	0.6944\\
};
\addlegendentry{2-step Poincare Median};

\addplot [color=mycolor2,solid,line width=2.5pt,mark=triangle,mark options={solid,rotate=180}]
  table[row sep=crcr]{%
-0.5	0.7248\\
-0.441176470588235	0.7264\\
-0.382352941176471	0.7226\\
-0.323529411764706	0.7026\\
-0.264705882352941	0.6534\\
-0.205882352941176	0.5924\\
-0.147058823529412	0.4748\\
-0.0882352941176471	0.2408\\
-0.0294117647058824	0.0186\\
0.0294117647058824	0.0168\\
0.0882352941176471	0.2732\\
0.147058823529412	0.487\\
0.205882352941177	0.5646\\
0.264705882352941	0.419\\
0.323529411764706	0.4186\\
0.382352941176471	0.6466\\
0.441176470588235	0.7206\\
0.5	0.7222\\
};
\addlegendentry{FP};

\addplot [color=black,solid,line width=2.5pt]
  table[row sep=crcr]{%
-0.5	0.7334\\
-0.441176470588235	0.742\\
-0.382352941176471	0.7342\\
-0.323529411764706	0.7098\\
-0.264705882352941	0.6524\\
-0.205882352941176	0.5704\\
-0.147058823529412	0.4466\\
-0.0882352941176471	0.199\\
-0.0294117647058824	0.007\\
0.0294117647058824	0.0072\\
0.0882352941176471	0.2066\\
0.147058823529412	0.4316\\
0.205882352941177	0.581\\
0.264705882352941	0.6638\\
0.323529411764706	0.7088\\
0.382352941176471	0.7204\\
0.441176470588235	0.7462\\
0.5	0.7324\\
};
\addlegendentry{Ideal};

\addplot [color=green,dotted,line width=2.5pt]
  table[row sep=crcr]{%
-0.5	0.6628\\
-0.441176470588235	0.668\\
-0.382352941176471	0.6752\\
-0.323529411764706	0.6362\\
-0.264705882352941	0.5526\\
-0.205882352941176	0.4346\\
-0.147058823529412	0.2648\\
-0.0882352941176471	0.0404\\
-0.0294117647058824	0.0018\\
0.0294117647058824	0.002\\
0.0882352941176471	0.0246\\
0.147058823529412	0.1294\\
0.205882352941177	0.2274\\
0.264705882352941	0.3396\\
0.323529411764706	0.442\\
0.382352941176471	0.528\\
0.441176470588235	0.6164\\
0.5	0.6654\\
};
\addlegendentry{OS-CFAR};

\end{axis}
\end{tikzpicture}%}
\subfigure[Detection probability vs. Signal Clutter Ratio (SCR) with a target of frequency $0.3$ ($\mu_1=0.7$, no outliers)]{\label{detection07d}\scriptsize% This file was created by matlab2tikz.
% Minimal pgfplots version: 1.3
%
%The latest updates can be retrieved from
%  http://www.mathworks.com/matlabcentral/fileexchange/22022-matlab2tikz
%where you can also make suggestions and rate matlab2tikz.
%
\definecolor{mycolor1}{rgb}{1.00000,0.00000,1.00000}%
\definecolor{mycolor2}{rgb}{0.00000,1.00000,1.00000}%
\begin{tikzpicture}

\begin{axis}[%
width=1.7in,
height=1.365625in,
at={(0.758333in,0.48125in)},
scale only axis,
xmin=-30,
xmax=20,
xlabel={SCR (dB)},
ymin=0,
ymax=1,
ylabel={Probability of detection},
title style={font=\bfseries},
title={GLRT detector},
axis x line*=bottom,
axis y line*=left,
legend style={at={(0.082602,0.98622)},anchor=south west,legend cell align=left,align=left,draw=white!15!black}
]
\addplot [color=blue,dash pattern=on 1pt off 3pt on 3pt off 3pt,line width=2.5pt]
  table[row sep=crcr]{%
20	1\\
17.0588235294118	1\\
14.1176470588235	1\\
11.1764705882353	1\\
8.23529411764706	0.9996\\
5.29411764705883	0.9914\\
2.35294117647059	0.962\\
-0.588235294117652	0.8968\\
-3.52941176470588	0.7928\\
-6.47058823529412	0.6606\\
-9.41176470588235	0.5198\\
-12.3529411764706	0.3952\\
-15.2941176470588	0.2842\\
-18.2352941176471	0.2032\\
-21.1764705882353	0.1416\\
-24.1176470588235	0.095\\
-27.0588235294118	0.0626\\
-30	0.0404\\
};
\addlegendentry{Normalized Burg};

\addplot [color=red,solid,line width=2.5pt,mark=square,mark options={solid}]
  table[row sep=crcr]{%
20	1\\
17.0588235294118	1\\
14.1176470588235	1\\
11.1764705882353	1\\
8.23529411764706	0.9988\\
5.29411764705883	0.9866\\
2.35294117647059	0.9506\\
-0.588235294117652	0.8788\\
-3.52941176470588	0.7658\\
-6.47058823529412	0.6312\\
-9.41176470588235	0.4846\\
-12.3529411764706	0.366\\
-15.2941176470588	0.2674\\
-18.2352941176471	0.1882\\
-21.1764705882353	0.1256\\
-24.1176470588235	0.0884\\
-27.0588235294118	0.0598\\
-30	0.037\\
};
\addlegendentry{2-step Euclidean Median};

\addplot [color=mycolor1,solid,line width=2.5pt]
  table[row sep=crcr]{%
20	1\\
17.0588235294118	1\\
14.1176470588235	1\\
11.1764705882353	0.9998\\
8.23529411764706	0.9964\\
5.29411764705883	0.9804\\
2.35294117647059	0.936\\
-0.588235294117652	0.8542\\
-3.52941176470588	0.7256\\
-6.47058823529412	0.5882\\
-9.41176470588235	0.4476\\
-12.3529411764706	0.3322\\
-15.2941176470588	0.2354\\
-18.2352941176471	0.1662\\
-21.1764705882353	0.1118\\
-24.1176470588235	0.0766\\
-27.0588235294118	0.0524\\
-30	0.0332\\
};
\addlegendentry{2-step Poincare Median};

\addplot [color=mycolor2,solid,line width=2.5pt,mark=triangle,mark options={solid,rotate=180}]
  table[row sep=crcr]{%
20	1\\
17.0588235294118	1\\
14.1176470588235	1\\
11.1764705882353	0.9998\\
8.23529411764706	0.9982\\
5.29411764705883	0.9832\\
2.35294117647059	0.9486\\
-0.588235294117652	0.8714\\
-3.52941176470588	0.7602\\
-6.47058823529412	0.6226\\
-9.41176470588235	0.4778\\
-12.3529411764706	0.359\\
-15.2941176470588	0.2594\\
-18.2352941176471	0.1828\\
-21.1764705882353	0.127\\
-24.1176470588235	0.0842\\
-27.0588235294118	0.0574\\
-30	0.0368\\
};
\addlegendentry{FP};

\addplot [color=black,solid,line width=2.5pt]
  table[row sep=crcr]{%
20	1\\
17.0588235294118	1\\
14.1176470588235	1\\
11.1764705882353	1\\
8.23529411764706	0.9998\\
5.29411764705883	0.993\\
2.35294117647059	0.9678\\
-0.588235294117652	0.9072\\
-3.52941176470588	0.8062\\
-6.47058823529412	0.6786\\
-9.41176470588235	0.534\\
-12.3529411764706	0.407\\
-15.2941176470588	0.2942\\
-18.2352941176471	0.2102\\
-21.1764705882353	0.149\\
-24.1176470588235	0.0988\\
-27.0588235294118	0.0672\\
-30	0.045\\
};
\addlegendentry{Ideal};

\addplot [color=green,dotted,line width=2.5pt]
  table[row sep=crcr]{%
20	1\\
17.0588235294118	1\\
14.1176470588235	1\\
11.1764705882353	1\\
8.23529411764706	0.9992\\
5.29411764705883	0.9134\\
2.35294117647059	0.2738\\
-0.588235294117652	0.0182\\
-3.52941176470588	0.0032\\
-6.47058823529412	0.0014\\
-9.41176470588235	0.001\\
-12.3529411764706	0.001\\
-15.2941176470588	0.001\\
-18.2352941176471	0.001\\
-21.1764705882353	0.001\\
-24.1176470588235	0.001\\
-27.0588235294118	0.001\\
-30	0.001\\
};
\addlegendentry{OS-CFAR};

\end{axis}
\end{tikzpicture}%}
%\subfigure[$20$ contaminating range cells with normalized frequency $0.3$]{\includegraphics[width=2.4in]{brouillonseuilunique07_50_50_20}}
\caption{Detection probability of a target vs. normalized frequency of the target or Signal-Clutter Ratio (SCR) with a clutter-to-noise ratio CNR=$40$dB for a fixed PFA = $10^{-3}$ (GLRT detector).}
\label{detection07}
\end{figure*}

Figures \ref{detection07}-\ref{detection07bis} take into account a scenario where $N-N_{out}$ neighbor cells are simulated through an AR(1) of parameter $\mu_1$ (considered as clutter) and $N_{out}$ contaminating cells are simulated through AR(1) of parameter $\mu_1 e^{0.3\times 2i\pi}$ of same power (considered as contaminating cells). Moreover, we insert a target at different frequencies represented in the $x$-axis in the cell under test.\\
Figures \ref{detection07}-\ref{detection07bis} illustrate the Doppler resolution of the detectors with respect to the clutter and the contaminating cells. A high frequency resolution is crucial in order to be able to detect small targets (with moderate power) with velocity similar to the (ambient or contaminating) clutter's.
We observe in Figures \ref{detection07}-\ref{detection07bis} that, since the Signal-Clutter Ratio is $0dB$, the probability of detection for frequencies close to the peak frequency of the clutter (i.e. 0) is close to $0$. For normalized frequencies close to $0.3$, only the performances of FP and Normalized Burg estimators as well as OS-CFAR detector decrease which illustrate the robustness of 2-step and Median procedures with respect to outliers. With $10$ contaminating range cells, the gap of detection probability can go up to $30\%$.\\
We illustrate in Figure \ref{detection07d} and \ref{figard}the detection probability with respect to the Signal-Clutter Ratio for a fixed target velocity. These two figures shows only slight differences between studied estimators (at the exception of OS-CFAR) in the non-contaminated case since the estimators are all independent with respect to the non-Gaussian texture.\\
Moreover, Burg estimators have a better frequency resolution with respect to the classical OS-CFAR estimation for the same reason explained in Section \ref{section_est}.\\

%\begin{table}[!t]
%\center
%%\small
%\caption{Gain of performance of 2-step MoNB Burg estimator used with GLRT detector denoted by Ordered Statistics-High Doppler Resolution-CFAR (OS-HDR-CFAR).}
%\begin{tabular}{|c|c|c|c|}
%   \hline
% &  $\mu_1$ & \parbox{2.5cm}{\vspace{0.1cm}Classical OS-CFAR\vspace{0.1cm}}& \parbox{4.5cm}{OS-HDR-CFAR (GLRT+ 2-step MoNB)}\\
%%   \hline
%%   \multirow{2}{*}{Minimum speed for $P_d>0.5$} &$\mu_1=0.9$ & 50\% & 70\%\\
%%   &$\mu_1=0.7$& \bigskip 50\% & 70\%\\
%   \hline
%   \multirow{2}{*}{\parbox{3.5cm}{Minimum frequency for $P_d>0.5$}} &$0.9$ & 0.25 & 0.15 \\
%   &$0.7$ & NA & 0.2\\
%      \hline
%   \multirow{2}{*}{\parbox{3.5cm}{Loss of detection for $31\%$ outliers}} &$0.9$ & 75\% & 5\%\\
%   &$0.7$&  80\% & 10\%\\
%\hline
%\end{tabular}
%\label{table_perfs}
%\end{table}

%\begin{figure*}[!t]
%\subfigure[No outliers]{\includegraphics[width=2.4in]{brouillonseuilunique07_50_50_0_6p}}
%\subfigure[$10$ contaminating range cells with normalized frequency $0.3$]{\includegraphics[width=2.4in]{brouillonseuilunique07_50_50_10_6p}}
%\subfigure[$20$ contaminating range cells with normalized frequency $0.3$]{\includegraphics[width=2.4in]{brouillonseuilunique07_50_50_20_6p}}
%\caption{Probability of detection of a target of SNR=$40$dB with a clutter CNR=$40$dB of correlation $\mu_1=0.7$ in function of the normalized frequency of the inserted target for PFA = $10^{-3}$ (GLRT detector). The number of pulses per burst is here equal to $d=6$.}
%\label{detection6p}
%\end{figure*}

\subsection{Comparison to a new geometrical detector}

Instead of using the GLRT statistics, we consider the geometrical detector as alternative for Normalized Burg and 2-step Burg estimators :
\begin{equation}
\text{AR}(\z) = \sum_{k=1}^M (M-k+1) d_P(\hat\mu_k(\z), \hat\mu_{k,amb})^2
\end {equation}
where $(\hat\mu_{1,amb}, ..., \hat\mu_{M,amb})$ are the estimated reflection coefficients of the $N$ surrounding cells and ($\hat\mu_1(\z)$,..., $\hat\mu_M(\z)$) is a regularized estimation of the underlying autoregressive process of the cell under test $\z\in\mathbb{C}^d$ (see for example \cite{barbaresco96}). Let us recall that $d_P$ is the Riemannian distance in the Poincar\'e disc considered in Section \ref{section_poincaremean}.\\
This detector does not directly provide an estimation of the normalized frequency of the detected target. However, its performances, illustrated by Figure \ref{detection07bis}, show competitive results with respect to the same scenario for the GLRT detector with a reduced complexity (neither an inversion of a $d\times d$ matrix nor a $\max$ computation are needed for this detector).

\begin{figure*}[!t]
\centering
\subfigure[No outliers ($\mu_1=0.7$, SCR=$0$dB)]{\scriptsize% This file was created by matlab2tikz.
% Minimal pgfplots version: 1.3
%
%The latest updates can be retrieved from
%  http://www.mathworks.com/matlabcentral/fileexchange/22022-matlab2tikz
%where you can also make suggestions and rate matlab2tikz.
%
\definecolor{mycolor1}{rgb}{1.00000,0.00000,1.00000}%
\begin{tikzpicture}

\begin{axis}[%
width=1.7in,
height=1.365625in,
at={(0.758333in,0.48125in)},
scale only axis,
xmin=-0.5,
xmax=0.5,
xlabel={Frequency},
ymin=0,
ymax=1,
ylabel={Probability of detection},
title style={font=\bfseries},
axis x line*=bottom,
axis y line*=left,
legend style={at={(0.062602,0.92622)},anchor=south west,legend cell align=left,align=left,draw=white!15!black}
]
\addplot [color=red,solid,line width=2.5pt,mark=diamond,mark options={solid}]
  table[row sep=crcr]{%
-0.5	0.7952\\
-0.441176470588235	0.7866\\
-0.382352941176471	0.7656\\
-0.323529411764706	0.7378\\
-0.264705882352941	0.6778\\
-0.205882352941176	0.5848\\
-0.147058823529412	0.4386\\
-0.0882352941176471	0.2102\\
-0.0294117647058824	0.0344\\
0.0294117647058824	0.04\\
0.0882352941176471	0.215\\
0.147058823529412	0.4414\\
0.205882352941177	0.5916\\
0.264705882352941	0.681\\
0.323529411764706	0.7332\\
0.382352941176471	0.7654\\
0.441176470588235	0.791\\
0.5	0.788\\
};
\addlegendentry{2-step Euclidean Median};

\addplot [color=mycolor1,solid,line width=2.5pt]
  table[row sep=crcr]{%
-0.5	0.7706\\
-0.441176470588235	0.761\\
-0.382352941176471	0.7406\\
-0.323529411764706	0.7066\\
-0.264705882352941	0.6508\\
-0.205882352941176	0.5514\\
-0.147058823529412	0.4096\\
-0.0882352941176471	0.2026\\
-0.0294117647058824	0.05\\
0.0294117647058824	0.0592\\
0.0882352941176471	0.2098\\
0.147058823529412	0.408\\
0.205882352941177	0.5594\\
0.264705882352941	0.652\\
0.323529411764706	0.7076\\
0.382352941176471	0.7398\\
0.441176470588235	0.7684\\
0.5	0.7648\\
};
\addlegendentry{2-step Poincare Median};

\addplot [color=blue,dash pattern=on 1pt off 3pt on 3pt off 3pt,line width=2.5pt]
  table[row sep=crcr]{%
-0.5	0.7922\\
-0.441176470588235	0.7818\\
-0.382352941176471	0.762\\
-0.323529411764706	0.7346\\
-0.264705882352941	0.672\\
-0.205882352941176	0.5796\\
-0.147058823529412	0.4336\\
-0.0882352941176471	0.2028\\
-0.0294117647058824	0.0322\\
0.0294117647058824	0.0368\\
0.0882352941176471	0.2056\\
0.147058823529412	0.4312\\
0.205882352941177	0.5816\\
0.264705882352941	0.6722\\
0.323529411764706	0.7288\\
0.382352941176471	0.7638\\
0.441176470588235	0.785\\
0.5	0.7824\\
};
\addlegendentry{Normalized Burg};

\addplot [color=green,dotted,line width=2.5pt]
  table[row sep=crcr]{%
-0.5	0.4384\\
-0.441176470588235	0.4306\\
-0.382352941176471	0.3756\\
-0.323529411764706	0.3172\\
-0.264705882352941	0.2150\\
-0.205882352941176	0.1226\\
-0.147058823529412	0.0422\\
-0.0882352941176471	0.0052\\
-0.0294117647058824	0.0022\\
0.0294117647058824	0.0032\\
0.0882352941176471	0.0068\\
0.147058823529412	0.0446\\
0.205882352941177	0.1198\\
0.264705882352941	0.2156\\
0.323529411764706	0.3202\\
0.382352941176471	0.3670\\
0.441176470588235	0.4200\\
0.5	0.4400\\
};
\addlegendentry{OS-CFAR};
    
\end{axis}
\end{tikzpicture}%}
\subfigure[$10$ contaminating range cells with normalized frequency $0.3$ ($\mu_1=0.7$, SCR=$0$dB)]{\scriptsize% This file was created by matlab2tikz.
% Minimal pgfplots version: 1.3
%
%The latest updates can be retrieved from
%  http://www.mathworks.com/matlabcentral/fileexchange/22022-matlab2tikz
%where you can also make suggestions and rate matlab2tikz.
%
\definecolor{mycolor1}{rgb}{1.00000,0.00000,1.00000}%
\begin{tikzpicture}

\begin{axis}[%
width=1.7in,
height=1.365625in,
at={(0.758333in,0.48125in)},
scale only axis,
xmin=-0.5,
xmax=0.5,
xlabel={Frequency},
ymin=0,
ymax=1,
ylabel={Probability of detection},
title style={font=\bfseries},
axis x line*=bottom,
axis y line*=left,
legend style={at={(0.062602,0.92622)},anchor=south west,legend cell align=left,align=left,draw=white!15!black}
]
\addplot [color=red,solid,line width=2.5pt,mark=diamond,mark options={solid}]
  table[row sep=crcr]{%
-0.5	0.8102\\
-0.441176470588235	0.815\\
-0.382352941176471	0.799\\
-0.323529411764706	0.7804\\
-0.264705882352941	0.7282\\
-0.205882352941176	0.6488\\
-0.147058823529412	0.5054\\
-0.0882352941176471	0.2738\\
-0.0294117647058824	0.07\\
0.0294117647058824	0.0552\\
0.0882352941176471	0.2392\\
0.147058823529412	0.4524\\
0.205882352941177	0.615\\
0.264705882352941	0.6924\\
0.323529411764706	0.748\\
0.382352941176471	0.7742\\
0.441176470588235	0.81\\
0.5	0.82\\
};
\addlegendentry{2-step Euclidean Median};

\addplot [color=mycolor1,solid,line width=2.5pt]
  table[row sep=crcr]{%
-0.5	0.7734\\
-0.441176470588235	0.77\\
-0.382352941176471	0.7538\\
-0.323529411764706	0.7246\\
-0.264705882352941	0.6712\\
-0.205882352941176	0.5766\\
-0.147058823529412	0.4302\\
-0.0882352941176471	0.2234\\
-0.0294117647058824	0.0668\\
0.0294117647058824	0.0668\\
0.0882352941176471	0.2242\\
0.147058823529412	0.4144\\
0.205882352941177	0.566\\
0.264705882352941	0.6506\\
0.323529411764706	0.7066\\
0.382352941176471	0.736\\
0.441176470588235	0.7722\\
0.5	0.7828\\
};
\addlegendentry{2-step Poincare Median};

\addplot [color=blue,dash pattern=on 1pt off 3pt on 3pt off 3pt,line width=2.5pt]
  table[row sep=crcr]{%
-0.5	0.7034\\
-0.441176470588235	0.7256\\
-0.382352941176471	0.7252\\
-0.323529411764706	0.7124\\
-0.264705882352941	0.6654\\
-0.205882352941176	0.5778\\
-0.147058823529412	0.447\\
-0.0882352941176471	0.2678\\
-0.0294117647058824	0.0982\\
0.0294117647058824	0.0386\\
0.0882352941176471	0.0936\\
0.147058823529412	0.2458\\
0.205882352941177	0.384\\
0.264705882352941	0.4762\\
0.323529411764706	0.5358\\
0.382352941176471	0.5978\\
0.441176470588235	0.661\\
0.5	0.7092\\
};
\addlegendentry{Normalized Burg};

\addplot [color=green,dotted,line width=2.5pt]
  table[row sep=crcr]{%
-0.5	0.3532\\
-0.441176470588235	0.3560\\
-0.382352941176471	0.3598\\
-0.323529411764706	0.3070\\
-0.264705882352941	0.2392\\
-0.205882352941176	0.1454\\
-0.147058823529412	0.0556\\
-0.0882352941176471	0.0096\\
-0.0294117647058824	0.0026\\
0.0294117647058824	0.0020\\
0.0882352941176471	0.0074\\
0.147058823529412	0.0258\\
0.205882352941177	0.0410\\
0.264705882352941	0.0682\\
0.323529411764706	0.1088\\
0.382352941176471	0.1812\\
0.441176470588235	0.2700\\
0.5	0.3562\\
};
\addlegendentry{OS-CFAR};

\end{axis}
\end{tikzpicture}%}
\subfigure[$10$ contaminating range cells with normalized frequency $0.3$ ($\mu_1=0.9$, SCR=$0$dB)]{\scriptsize% This file was created by matlab2tikz.
% Minimal pgfplots version: 1.3
%
%The latest updates can be retrieved from
%  http://www.mathworks.com/matlabcentral/fileexchange/22022-matlab2tikz
%where you can also make suggestions and rate matlab2tikz.
%
\definecolor{mycolor1}{rgb}{1.00000,0.00000,1.00000}%
\begin{tikzpicture}

\begin{axis}[%
width=1.7in,
height=1.365625in,
at={(0.758333in,0.48125in)},
scale only axis,
xmin=-0.5,
xmax=0.5,
xlabel={Frequency},
ymin=0,
ymax=1,
ylabel={Probability of detection},
title style={font=\bfseries},
axis x line*=bottom,
axis y line*=left,
legend style={at={(0.062602,0.92622)},anchor=south west,legend cell align=left,align=left,draw=white!15!black}
]
\addplot [color=red,solid,line width=2.5pt,mark=diamond,mark options={solid}]
  table[row sep=crcr]{%
-0.5	0.9262\\
-0.441176470588235	0.928\\
-0.382352941176471	0.9238\\
-0.323529411764706	0.9076\\
-0.264705882352941	0.8828\\
-0.205882352941176	0.8346\\
-0.147058823529412	0.7468\\
-0.0882352941176471	0.496\\
-0.0294117647058824	0.06\\
0.0294117647058824	0.0402\\
0.0882352941176471	0.4658\\
0.147058823529412	0.7158\\
0.205882352941177	0.81\\
0.264705882352941	0.8628\\
0.323529411764706	0.8976\\
0.382352941176471	0.912\\
0.441176470588235	0.9212\\
0.5	0.9294\\
};
\addlegendentry{2-step Euclidean Median};

\addplot [color=mycolor1,solid,line width=2.5pt]
  table[row sep=crcr]{%
-0.5	0.8638\\
-0.441176470588235	0.8654\\
-0.382352941176471	0.8668\\
-0.323529411764706	0.8436\\
-0.264705882352941	0.7992\\
-0.205882352941176	0.7246\\
-0.147058823529412	0.5838\\
-0.0882352941176471	0.3034\\
-0.0294117647058824	0.0264\\
0.0294117647058824	0.0198\\
0.0882352941176471	0.2874\\
0.147058823529412	0.561\\
0.205882352941177	0.6872\\
0.264705882352941	0.7742\\
0.323529411764706	0.8204\\
0.382352941176471	0.838\\
0.441176470588235	0.8616\\
0.5	0.8716\\
};
\addlegendentry{2-step Poincare Median};

\addplot [color=blue,dash pattern=on 1pt off 3pt on 3pt off 3pt,line width=2.5pt]
  table[row sep=crcr]{%
-0.5	0.7812\\
-0.441176470588235	0.81\\
-0.382352941176471	0.829\\
-0.323529411764706	0.8142\\
-0.264705882352941	0.7672\\
-0.205882352941176	0.698\\
-0.147058823529412	0.555\\
-0.0882352941176471	0.3032\\
-0.0294117647058824	0.0422\\
0.0294117647058824	0.0004\\
0.0882352941176471	0.0804\\
0.147058823529412	0.326\\
0.205882352941177	0.4978\\
0.264705882352941	0.5974\\
0.323529411764706	0.6632\\
0.382352941176471	0.6912\\
0.441176470588235	0.7588\\
0.5	0.793\\
};
\addlegendentry{Normalized Burg};

\addplot [color=green,dotted,line width=2.5pt]
  table[row sep=crcr]{%
-0.5	0.6628\\
-0.441176470588235	0.668\\
-0.382352941176471	0.6752\\
-0.323529411764706	0.6362\\
-0.264705882352941	0.5526\\
-0.205882352941176	0.4346\\
-0.147058823529412	0.2648\\
-0.0882352941176471	0.0404\\
-0.0294117647058824	0.0018\\
0.0294117647058824	0.002\\
0.0882352941176471	0.0246\\
0.147058823529412	0.1294\\
0.205882352941177	0.2274\\
0.264705882352941	0.3396\\
0.323529411764706	0.442\\
0.382352941176471	0.528\\
0.441176470588235	0.6164\\
0.5	0.6654\\
};
\addlegendentry{OS-CFAR};

\end{axis}
\end{tikzpicture}%}
\subfigure[Detection probability vs. Signal Clutter Ratio (SCR) with a target of frequency $0.3$ ($\mu_1=0.7$, no outliers)]{\label{figard}\scriptsize% This file was created by matlab2tikz.
% Minimal pgfplots version: 1.3
%
%The latest updates can be retrieved from
%  http://www.mathworks.com/matlabcentral/fileexchange/22022-matlab2tikz
%where you can also make suggestions and rate matlab2tikz.
%
\definecolor{mycolor1}{rgb}{1.00000,0.00000,1.00000}%
\begin{tikzpicture}

\begin{axis}[%
width=1.7in,
height=1.365625in,
at={(0.758333in,0.48125in)},
scale only axis,
xmin=-30,
xmax=20,
xlabel={SCR (dB)},
ymin=0,
ymax=1,
ylabel={Probability of detection},
title style={font=\bfseries},
title={AR detector},
axis x line*=bottom,
axis y line*=left,
legend style={at={(0.048549,0.999293)},anchor=south west,legend cell align=left,align=left,draw=white!15!black}
]
\addplot [color=mycolor1,solid,line width=2.5pt,mark=diamond,mark options={solid}]
  table[row sep=crcr]{%
20	1\\
17.0588235294118	1\\
14.1176470588235	1\\
11.1764705882353	1\\
8.23529411764706	0.9964\\
5.29411764705883	0.9752\\
2.35294117647059	0.9228\\
-0.588235294117652	0.8134\\
-3.52941176470588	0.6552\\
-6.47058823529412	0.4756\\
-9.41176470588235	0.3362\\
-12.3529411764706	0.2094\\
-15.2941176470588	0.1318\\
-18.2352941176471	0.0772\\
-21.1764705882353	0.0432\\
-24.1176470588235	0.024\\
-27.0588235294118	0.0122\\
-30	0.0036\\
};
\addlegendentry{2-step Euclidean Median};

\addplot [color=red,solid,line width=2.5pt]
  table[row sep=crcr]{%
20	1\\
17.0588235294118	1\\
14.1176470588235	1\\
11.1764705882353	0.9994\\
8.23529411764706	0.9938\\
5.29411764705883	0.9672\\
2.35294117647059	0.9046\\
-0.588235294117652	0.7774\\
-3.52941176470588	0.6128\\
-6.47058823529412	0.4402\\
-9.41176470588235	0.295\\
-12.3529411764706	0.185\\
-15.2941176470588	0.1158\\
-18.2352941176471	0.0648\\
-21.1764705882353	0.0358\\
-24.1176470588235	0.0194\\
-27.0588235294118	0.0084\\
-30	0.0016\\
};
\addlegendentry{2-step Poincare Median};

\addplot [color=blue,dash pattern=on 1pt off 3pt on 3pt off 3pt,line width=2.5pt]
  table[row sep=crcr]{%
20	1\\
17.0588235294118	1\\
14.1176470588235	1\\
11.1764705882353	1\\
8.23529411764706	0.9972\\
5.29411764705883	0.9772\\
2.35294117647059	0.9268\\
-0.588235294117652	0.822\\
-3.52941176470588	0.6612\\
-6.47058823529412	0.4906\\
-9.41176470588235	0.341\\
-12.3529411764706	0.2164\\
-15.2941176470588	0.1382\\
-18.2352941176471	0.0778\\
-21.1764705882353	0.0428\\
-24.1176470588235	0.0246\\
-27.0588235294118	0.0126\\
-30	0.0044\\
};
\addlegendentry{Normalized Burg};

\addplot [color=green,dotted,line width=2.5pt]
  table[row sep=crcr]{%
20	1\\
17.0588235294118	1\\
14.1176470588235	1\\
11.1764705882353	1\\
8.23529411764706	0.9992\\
5.29411764705883	0.9134\\
2.35294117647059	0.2738\\
-0.588235294117652	0.0182\\
-3.52941176470588	0.0032\\
-6.47058823529412	0.0014\\
-9.41176470588235	0.001\\
-12.3529411764706	0.001\\
-15.2941176470588	0.001\\
-18.2352941176471	0.001\\
-21.1764705882353	0.001\\
-24.1176470588235	0.001\\
-27.0588235294118	0.001\\
-30	0.001\\
};
\addlegendentry{OS-CFAR};

\end{axis}
\end{tikzpicture}%}
\caption{Detection probability of a target vs. normalized frequency of the target or Signal-Clutter Ration (SCR) with a clutter-to-noise ratio CNR=$40$dB for a fixed PFA = $10^{-3}$ (AR detector).}
\label{detection07bis}
\end{figure*}

\subsection{CFAR property of detectors}

We illustrate the CFAR property of different detectors in Table \ref{table_cfar1} by computing the true probability of false alarm (PFA) of the test statistics for a unique threshold. This property is crucial for the detectors to be stable on real operations.\\
Since all proposed estimates are independent with respect to texture realizations (neglecting thermal noise), the CFAR property with respect to the texture distribution holds for both GLRT and AR detectors.\\
Moreover, the GLRT detector coupled with Fixed Point estimator was shown to have CFAR property with respect to the clutter covariance (see e.g. \cite{conte04b} which also contains validation of this property on real radar measurements); it is confirmed by Table \ref{table_cfar1} where only thermal noise have a small impact on the PFA in the non-contaminated case. The presence of contaminating range cells, however, breaks the CFAR property, especially for non-robust estimators (Normalized Burg and FP).\\
Table shows that GLRT detector is more robust than AR detector with respect to CFAR property.\\
Let us note furthermore that the GLRT detector is more robust than AR detector with respect to CFAR property. This could lead to further improvements of the latter.

\begin{table}[!t]
\center
%\small
\caption{False alarm probability (to be multiplied by $10^{-3}$) for a fixed threshold computed through Monte-Carlo simulations for the scenario with $\mu_1=0.7$ without contamination; x/y means PFA=$x.10^{-3}$ for the GLRT detector and PFA = $y.10^{-3}$ for the AR detector (unavailable for FP estimator).} \begin{tabular}{|c|c|c|c|c|c|}
   \hline
 &\parbox{2.5cm}{\centerline{Normalized Burg}}   & \parbox{2.9cm}{{2-step Euclidean\\  Median Burg}} & \parbox{2.7cm}{2-step Poincar\'e\\ Median Burg} & \parbox{2.3cm}{\centerline{FP}} \\
\hline
\parbox{4cm}{no contamination\\ ($\mu_1=0.3$)} & $0.8/1.2$ & $2.5/1.2$ & $0.6/1.1$ & $0.6/$NA \\
\parbox{4cm}{no contamination\\ ($\mu_1=0.7$)} & $1.0/1.0$ & $1.0/1.0$ & $1.0/1.0$ & $1.0/$NA \\
\parbox{4cm}{no contamination\\ ($\mu_1=0.9$)} &  $2.8/8.8$ & $3.0/12.0$ & $2.6/5.0$ & $1.6/$NA \\
%\parbox{4cm}{$10$ contaminating range\\ cells ($\mu_1=0.7$)} & $3.4/3.0$ & $2.2/0.80$ & $2.2/2.2$ & $2.0$ \\
%\parbox{4cm}{$10$ contaminating range\\ cells ($\mu_1=0.9$)} & $6.8/28.2$ & $3.4/7.2$ & $2.4/5.0$ & $1.60$ \\
\parbox{4cm}{$20$ contaminating range\\ cells ($\mu_1=0.7$)} & $5.8/15.2$ & $3.2/0.40$ & $2.2/0.60$ & $3.2/$NA \\
\parbox{4cm}{$20$ contaminating range\\ cells ($\mu_1=0.9$)} & $15.0/164.8$ & $3.8/4.0$ & $4.0/5.6$ & $5.8/$NA \\
\hline
\end{tabular}
\label{table_cfar1}
\end{table}

\section{Conclusion}

We have presented Burg methods for mixtures of autoregressive vectors which are a natural family of distributions when we consider non-Gaussian stationary clutter. We proposed several estimators independent of the non-Gaussian texture in this framework and studied their behavior especially in terms of robustness with respect to contamination and efficiency. We can sum up these through the following insights:

We have presented several Burg methods for mixtures of autoregressive vectors which are a natural family of distributions when we consider non-Gaussian stationary clutter. We proposed several estimators independent of the non-Gaussian texture in this framework and studied their behavior especially in terms of robustness with respect to contamination and efficiency. We can sum up these through the following insights:
\begin{itemize}
\item Thanks to the exploitation of the Toeplitz structure, Burg methods complexity goes \textbf{from} $\mathbf{O(d^3)}$ (that corresponds to the complexity of the inversion of a matrix $d\times d$) \textbf{to} $\mathbf{O(M^2)}$ (recall that $M$ stands for the order of the autoregressive model).
\item It is useful to take into account the non-Gaussianity of the clutter for sub-exponential amplitude distributions.
\item Considering medians instead of means in the Burg estimators furnishes a robustness for medium contamination ($10\%$ to $30\%$ outlier samples).
\item 2-step procedures (consisting in a selection of secondary data) need \textbf{a first estimation of the scatter matrix of the clutter that is robust enough in order to be efficient}. In that case, high contamination (close to $50\%$) can be considered.
\item Taking into account the Poincar\'e metric is efficient for reflection parameters of high order (that should be close to $0$) since they tends to under-estimate their modulus.
\end{itemize}

Future works will be devoted to the extension of these methods for non-stationary signal in the burst \cite{lebrigant14,ruiz15}.
This work will be also extended for STAP (Space-Time Adaptive Processing) based on OS-STAP algorithm described in \cite{barbaresco12} by computing mean and median Toeplitz-Block-Toeplitz covariance matrices parameterized by Matrix-Valued Autoregressive model, and based on numerical scheme described in \cite{jeuris15,jeuris15b,jeuris15c}.

% use section* for acknowledgement

\section*{Acknowledgment}

The authors would like to thank the DGA/MRIS for its support as well as the referee for useful comments.

\end{document}